
\documentclass[11pt]{amsart}

\usepackage{setspace} 
\usepackage{amssymb}
\usepackage[dvips]{color}
\usepackage{amsmath}
\usepackage{amsthm}
\usepackage[a4paper]{geometry}
\usepackage{amsfonts}
\usepackage[all]{xypic} 
\usepackage{bbm}
\usepackage{xcolor}

\usepackage{color}
\usepackage[colorlinks=true,linkcolor=blue,citecolor=blue,pdfpagelabels=false]{hyperref}
\usepackage{cleveref}
\usepackage{url}

\usepackage{hyperref}

\usepackage[colorinlistoftodos]{todonotes}


\linespread{1.27}

\headheight=10pt     \topmargin=0pt \textheight=638pt
\textwidth=448pt \oddsidemargin=18pt \evensidemargin=18pt

\addtolength{\textheight}{3mm} \addtolength{\textwidth}{11mm}
\addtolength{\oddsidemargin}{-8mm}  
\allowdisplaybreaks[4] 
\addtolength{\evensidemargin}{-8mm} \addtolength{\topmargin}{-5mm}

\vfuzz8pt 

\newtheoremstyle{myremark}     {10pt}{10pt}{}{}{\bfseries}{.}{.5em}{}
\newtheoremstyle{myremark}     {10pt}{10pt}{}{}{\bfseries}{.}{.5em}{}

 \newtheorem{thm}{Theorem}[section]
 
 \newtheorem{lem}[thm]{Lemma}
 \newtheorem{prop}[thm]{Proposition}
 \theoremstyle{definition}
 \newtheorem{defn}[thm]{Definition}

  \theoremstyle{myremark}
  \newtheorem{rem}[thm]{Remark}


 \newcommand{\h}{\mathcal{H}}

  \newcommand{\D}{\mathcal{D}}
 
 \newcommand{\J}{\mathcal{J}}
 \newcommand{\I}{\mathcal{I}}
 \newcommand{\M}{\mathcal{M}}

 \newcommand{\F}{\mathcal{F}}

 \newcommand{\B}{\mathbf{B}}

 \newcommand{\Z}{\mathbb{Z}}
 \newcommand{\N}{\mathbb{N}}

 \newcommand{\R}{\mathbb{R}}
 \newcommand{\C}{\mathbb{C}}

 \newcommand{\EssD}{\mathcal{D}}

 \newcommand{\abs}[1]{\left\vert#1\right\vert}
 \newcommand{\set}[1]{\left\{#1\right\}}
 
 \newcommand{\norm}[1]{\left\Vert#1\right\Vert}
 
 \newcommand\inner[2]{\left\langle #1, #2 \right\rangle}

\title[ Bicentralizer of  Mixed $q$-deformed Araki-Woods Algebras]{ Connes' Bicentralizer Problem for Mixed $q$-deformed Araki-Woods Algebras}

\author[Bikram]{Panchugopal Bikram}

\address{School of Mathematical Sciences, National Institute of Science Education and Research, Bhubaneswar, A CI of HBNI, Jatani- 752050, India}
\email{bikram@niser.ac.in}

\begin{document}

\keywords{  von Neumann algebras,  mixed q-deformed Araki-Woods von Neumann algebra, Connes' bicentralizer}
\subjclass[2020]{. Primary 46L10; Secondary 46L40, 46L53, 46L54, 46L36,
46C99.}

\begin{abstract}

In this article, we show that the mixed $q$-deformed Araki-Woods von Neumann algebra  $\Gamma_T(H_\R, U_t)^{\prime\prime}$, introduced  in (\cite{BKM20}), has  trivial bicentralizer, whenever it is of type $\mathrm{III}_1$.

\end{abstract}

\maketitle
\section{Introduction}
In  (\cite{BKM20}), authors associated any pair  $(\h_{\R}, U_t)$ of separable real Hilbert space 
($\dim( \h_{\R})> 1$) and strongly continuous orthogonal representation of $\R$ on $\h_{\R}$ to a von Neumann algebra coming from the non-tracial representation of the $q_{i,j}$-commutation relation for $-1 < q_{i,j}=q_{j, i} < 1$. The concerned von Neumann algebras are named as  \textit{ mixed $q$-deformed Araki-Woods von Neumann algebras}. In this article, we study Connes bicentralizer conjecture for this von Neumann. 

Let $M$ be a $\sigma$-finite von Neumann algebra equipped with a faithful normal state $\varphi$.
Then recall that the  asymptotic centralizer $AC_{\varphi}(M)$ and the 
bicentralizer $B_{\varphi}(M)$ are defined as follows:
\begin{align}
&AC_{\varphi}(M):=\set{(x_n)_n\in \ell^{\infty}(\N,M)| \lim_{n\rightarrow\infty} \norm{x_n\varphi-\varphi x_n}_{M_*} =0}; \\
&B_{\varphi}(M) := \set{a\in M | ax_n-x_na\rightarrow 0 \ \text{ultrastrongly for all}\ (x_n)_n\in AC_{\varphi}(M)}.
\end{align}
Here, the notation $x\varphi y$ for $x,y \in M$ means $x\varphi y(a)=\varphi(xay).$ The bicentralizer $B_{\varphi}(M)$ is a von Neumann subalgebra of $M$, which is globally invariant under the modular group $(\sigma_t^{\varphi})_{t\in\R}$. It follows from Connes-Størmer transitivity theorem  (see  \cite{CS78})  that if $B_{\varphi}(M)=\C1$ then $B_{\psi}(M)=\C1$ for all faithful normal state $\psi$ on $M$,  where $1$ denotes the identity operator in $M$. Hence, the triviality of the bicentralizer does not  depend on the choice of faithful normal states on $M$.

Connes classified all amenable factors  which are not of type  $\mathrm{III}_1$ in his famous paper  \cite{Connes-Class}. 
First he encountered the bicentralizer problem when he was attempting to establish the uniqueness of  amenable type $\mathrm{III}_1$ factors.
Indeed, Connes proved that an amenable type $\mathrm{III}_1$ factor $M$ is isomorphic to the unique Araki-Woods factor of type  $\mathrm{III}_1$ if and only if $B_{\varphi}(M)=\C1$  for some faithful  normal state $ {\varphi} \in M_*$. \\
Furthermore, he conjectured that  $B_{\varphi}(M)=\C1$  for every type $\mathrm{III}_1$ factor $M$  and every  faithful normal state $ {\varphi} \in M_*$. This conjecture, known as Connes' ``bicentralizer problem" and thereafter,   
Haagerup is renowned for solving this  problem when $M$ is amenable type  $\mathrm{III}_1$ factors (see \cite{Haag-Bicen}).  
Thus,  Haagerup's work  together with Connes' work, settled the classification problem for amenable factors of all types. However, when $M$ is not amenable, Connes' Bicentralizer problem is still extensively  open today. 

But  the conjecture has been supported by several  concrete examples of such factors. For example, factors with almost periodic states, free Araki-Woods factors and  semisolid factors (see \cite{Co74}, \cite{H09}, and \cite{HI17} respectively). 
In \cite[Main Theorem]{HI20},  the authors proved that $q$-Araki-Woods factors associated to  orthogonal  representations $(U_t)$ with weakly mixing component  have trivial bicentralizer. So, the bicentralizer problem for  a  $q$-deformed Araki-Woods von Neumann algebra associated to  an almost periodic orthogonal representation remains unsolved.

In this article, we study bicentralizer problem of mixed   $q$-deformed Araki-Wood von Neumann algebra.
Since from the  inception of the   theory of von Neumann  algebras,  Araki-Woods von Neumann algebras and their generalizations are  studied extensively for their rich structures. It begins with Voiculescu's Free Gaussian functor. In free probability Voiculescu associated a $C^*$-algebra $\Gamma(\h_\R)$ to real Hilbert space, generated by $s(\xi),  \xi \in \h_\R$ 
 where each $s(\xi)$  is the sum of creation and annihilation operators on
the full Fock space of the complexification of $\h_\R$. The associated von Neumann algebra $\Gamma(\h_\R)$ is isomorphic to $L( \F_{ \dim({\h_R})})$ (the group von Neumann algebra associated to the  free group of $ \dim({\h_\R})$ generators).
Further, it has the following deformation: 
\begin{enumerate}
	\item The q-Gaussian functor due to  Bo\.zejko and Speicher for $-1 < q < 1$  (see   \cite{BS}). The   associated von Neumann algebras are called q-Gaussian von Neumann algebras.
	\item The free CAR functor due to Shlyakhtenko  (see  \cite{Shly}) and the    associated von Neumann algebras are called free Araki-Woods factor. 
	\item  q-deformed Araki-Woods von Neumann algebras which is combination of $(1)$ and $(2)$,  consructed by Hiai (see \cite{Hiai}). 
\end{enumerate}
The mixed $q$-deformed Araki-Woods von Neumann algebras are further generalization of   q-deformed Araki-Woods von Neumann algebras. In recent years, there has been a lot of interest in these von Neumann algebras.
The reader may look at \cite{BKM20}, \cite{BM17}, \cite{BMRW}, \cite{R05}, \cite{MK1}, \cite{W21}, and the references therein for an overview of the literature.

The mixed $q$-deformed Araki-Woods von Neumann algebras are relatively new in the area.
So, far its factoriality, type classification, Haagerup property, non-injectivity properties  are studied (see \cite{BKM20}, \cite{BKM21}). 
In  \cite{BKM23} and \cite{BKM20} factoriality of these algebras  are studied under different conditions on $(U_t)_{t\in\R}$.
Recently, in  \cite{MK1}, the factoriality  is studied  when the underlying Hilbert space is finite dimension. By combining the results of   \cite{BKM23} and \cite{MK1},  the  type classification question can be  answered in complete  generality.

To study the bicentralizer, we use the  ultraproduct description of bicentralizer as given in 
\cite[Proposition.3.3]{HI17}. Suppose $M$ is a  von Neumann algebra with a faithful normal state $\varphi$, then we write $M^\varphi $ for the centralizer of $M$ with respect to $\varphi$ and   $M^\varphi  = \{ x\in M :  \varphi(xy)= \varphi(yx) \text{ for all } y \in M \}$. It  is shown (see \cite[Proposition.3.3]{HI17}) that $B_{\varphi}(M)$ can be realized as a subalgebra of the ultraproduct  of the von Neumann algebra $M$, more precisely, if $\omega$ is any free ultrafilter over   $\N$, then it follows that 
\begin{align}\label{rela}
B_\phi(M) = ((M^\omega)^{\varphi^{\omega} })' \cap M \subseteq M^\omega.
\end{align}
Then using this, we  directly show that mixed $q$-deformed Araki-Woods von Neumann algebra associated with $(U_t)$ having almost periodic component of dimension $\geq 3$, have trivial bicentralizer.   

For the other case,  we follow similar path as followed by the author in \cite{HI20}. 
But we like to point out that   due to recent development of factoriality of these von Neumann algebras,  our argument  is much simpler than \cite{HI20}. In fact finding a sequence of unitary converging to $0$ weakly in the centralizer of $M_T$ makes the argument simple.   
Now we state our main theorem: 
\begin{thm}\label{main-theorem}
	Let  $-1<q_{ij}=q_{ji}<1$ be  real numbers such that $\sup_{i,j}\abs{q_{ij}}<1$ and let $(\h_{\R}, U_t)$ be a strongly continuous orthogonal representation such that the mixed $q$-Araki-Woods algebra $M_T$  is a type $\mathrm{III}_1$ factor, then  it  has  trivial bicentralizer.
\end{thm}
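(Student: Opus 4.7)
The plan is to use the ultraproduct characterization \eqref{rela}, so that $B_\varphi(M_T) = ((M_T^\omega)^{\varphi^\omega})' \cap M_T$, where $\varphi$ is the vacuum state on $M_T$ whose modular group is the second quantization of $(U_t)_{t\in\R}$. Since triviality of the bicentralizer is independent of the choice of faithful normal state, it suffices to work with this particular $\varphi$. Following the introduction, I would split the proof along the structure of $(U_t)_{t\in\R}$, corresponding to the two subcases mentioned there.

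In the first case, where the almost periodic part of $(U_t)$ has dimension at least three, the strategy is direct. I would pick three eigenvectors $\xi_1,\xi_2,\xi_3$ (in the complexification) of the analytic generator of $(U_t)$ with distinct positive eigenvalues $\lambda_1,\lambda_2,\lambda_3$, and build monomials in the creation and annihilation operators $s_T(\xi_i)$ whose modular eigenvalues cancel exactly; this produces a family of unitaries lying in the centralizer $M_T^\varphi$. The extra combinatorial freedom coming from a third eigenvector is precisely what forces the relative commutant of this family in $M_T$ to be trivial. Combined with the factoriality of $M_T$ established in \cite{BKM23, MK1}, the ultraproduct formula \eqref{rela} then yields $B_\varphi(M_T) = \C 1$ directly.

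In the remaining case, I would follow the broad outline of Houdayer--Isono \cite{HI20}, but in the streamlined form hinted at in the introduction. The central ingredient is the construction of a sequence $(u_n)_n$ of unitaries in the centralizer $M_T^\varphi$ that converges weakly to zero in $M_T$. Since $M_T^\varphi$ embeds into $AC_\varphi(M_T)$ through constant sequences, such a $(u_n)$ defines a unitary in $(M_T^\omega)^{\varphi^\omega}$, and by the very definition of $B_\varphi(M_T)$ every $a \in B_\varphi(M_T)$ must commute with it. Producing enough such sequences so that the algebra they generate inside $(M_T^\omega)^{\varphi^\omega}$ has trivial relative commutant in $M_T$, and invoking factoriality, will then conclude that $B_\varphi(M_T) = \C 1$.

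The technical heart of the argument, and the main obstacle, is the explicit construction of these weakly null unitaries inside the centralizer. In the non-tracial mixed $q$-Araki--Woods setting the centralizer is considerably smaller than $M_T$, and the unitaries must be assembled from Wick-type words in the $s_T(\xi)$'s whose modular eigenvalues cancel; weak convergence to zero then follows from the mixing properties of $(U_t)$ together with the combinatorial structure of the mixed $q_{ij}$-Wick calculus. Once this sequence is in hand, the factoriality results of \cite{BKM23, MK1} allow the argument to bypass the delicate subfactor constructions of \cite{HI20}, yielding a substantially shorter proof.
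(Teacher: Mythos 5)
Your overall skeleton (the ultraproduct formula \eqref{rela}, the case split along $\h_\R^{ap}$ versus $\h_\R^{wm}$, and the use of weakly null unitaries in the centralizer) matches the paper, but both halves of your proposal have a genuine gap at the point where all the work is. In the almost periodic case you assert that monomials in the $s(\xi_i)$ with cancelling modular eigenvalues ``produce a family of unitaries'' whose relative commutant in $M_T$ is trivial because of ``combinatorial freedom''; neither claim is an argument (a Wick monomial is not unitary, and no mechanism is given for computing the relative commutant). The paper does something different and more concrete: for each $(U_t)$-invariant subspace $\D_\R\subseteq\h_\R^{ap}$ it quotes the factoriality results of \cite{BKM23,MK1} to see that $(M_T(\D_\R))^{\varphi}$ is a $\mathrm{II}_1$ factor, hence diffuse, hence contains a weakly null sequence of unitaries (Lemma \ref{unitary}); the key step you are missing is then the localization lemma (Lemma \ref{commu}): if $x\in M_T$ commutes with such a weakly null unitary sequence supported in $M_T(\D_\R)\cap M_T^{\varphi}$, then $x\in M_T(\D_\R)$. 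Triviality of the bicentralizer follows not from a relative commutant computation for one family, but from intersecting $M_T(\D^1_\R)\cap M_T(\D^2_\R)=\C 1$ for two orthogonal invariant subspaces --- which is exactly why $\dim(\h_\R^{ap})\geq 3$ is needed. Without some version of Lemma \ref{commu} your first case does not close.

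The gap in your second case is more serious. You propose to build the weakly null centralizer unitaries inside $M_T^{\varphi}$ itself, ``assembled from Wick-type words in the $s(\xi)$'s whose modular eigenvalues cancel.'' When $(U_t)$ restricted to $\h_\R^{wm}$ is weakly mixing, the generator $A$ has no point spectrum there, so there are no eigenvectors and no modular eigenvalues to cancel; this construction cannot be carried out in $M_T$. The entire point of the Houdayer--Isono strategy, which the paper follows, is to pass to the ultraproduct Hilbert space $(\h_\R)_{U,\omega}$, where the weakly mixing part acquires an infinite-dimensional almost periodic invariant subspace $L_\R$ (Theorem \ref{subspace}), to embed $\Gamma_T(\widetilde{\h}_\R,U_\omega)''$ into $M_T^{\omega}$ in a state-preserving way with conditional expectation (Theorem \ref{embedding} and Remark \ref{condi}), and only then to produce the weakly null centralizer unitaries and run the localization argument of Lemma \ref{commu} inside $M_T^{\omega}$, again finishing by intersecting two orthogonal invariant subspaces of $L_\R$. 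Your proposal names this as ``the technical heart'' but places it in the wrong algebra, so the obstacle you flag is not merely technical: as stated, the construction is impossible, and the missing idea is precisely the passage to $(\h_\R^{wm})_{U,\omega}\ominus\h_\R^{wm}$.
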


We remark  that  the mixed $q$-deformed Araki-Woods von Neumann algebra $M_T$ is of type $\mathrm{III}_1$, whenever the group generated by the eigenvalues of the analytic generator $A$ of $(U_t)_{t\in\R}$ is equal to $\R_{*}^{\times}$ (see \cite{MK1} and \cite{BKM23}). Thus, $M_T$ associated to almost periodic orthogonal representation  could  be type  $\mathrm{III}_1$.  We note that  if $ q_{ i, j} = q $ for all $ i, j$, then we obtain $q$-deformed Araki-Wood von Neumann algebras. Thus, our theorem also completely solves the bicentralizer problem for  $q$-deformed Araki-Woods von Neumann algebras.

We organize this article as follows: In section 2, we describe the construction of the mixed $q$-deformed Araki-Woods von Neumann algebras and recall some basics on ultraproduct of von Neumann algebras. In section 3, we discuss bicentralizer of mixed $q$-deformed algebra  when associated orthogonal representation  with non trivial almost periodic part and in the final section we prove Theorem \ref{main-theorem}.


\section{Preliminary}
\subsection{Mixed $q$-deformed Araki-Wood von Neumann algebra}
We briefly recall the mixed $q$-deformed Araki-Woods algebra from \cite{BKM20}. Let $\h_{\R}$ be a separable real Hilbert space. 
and  $\R\ni t\mapsto U_{t} $ be a strongly continuous orthogonal representation of $\R$ on $\h_{\R}$. We denote the complexification of $\h_{\R}$ by $\h_{\C}=\h_{\R}\otimes \C$.  Denote the inner product and norm on $\h_{\C}$ by $\inner{\cdot}{\cdot}_{\h_{\C}}$ and $\norm{\cdot}_{\h_{\C}}$ respectively. All inner products in this article are considered to be linear in second variable.  Identify $\h_{\R}$ in $\h_{\C}$ by $\h_{\R}\otimes 1$. Since $\h_{\C}= \h_{\R}+i \h_{\R}$, as a real Hilbert space the inner product of $\h_{\R}$ in $\h_{\C}$ is given by $\mathfrak{R}\inner{\cdot}{\cdot}_{\h_\C}$.  We denote the complex conjugation on $\h_{\C}$ by $\J$, which is a bounded anti-linear operator.

We can extend the representation $\R\ni t \mapsto U_{t}$ from $ \h_{\R}$ to a strongly continuous one-parameter group of unitaries on $\h_{\C}$, which is denoted by $(U_t)_{t\in\R}$ with a slight abuse of notation. Let $A$ be the analytic generator of $(U_{t})_{t\in \R}$ acting on $\h_{\C}$ obtained via Stone's theorem, which is non-degenerate, positive, and self-adjoint.
Now we quickly recall the following definition: 
\begin{defn} 
\begin{enumerate}
	\item $(U_t)$  is called  almost periodic if $A$ is diagonalizable and  
	\item $(U_t)$  is called  weakly mixing $A$ has no  point spectrum.  
\end{enumerate}
 Then we note that $( \h_R, U_t)$  has a unique decomposition  $ \h_\R =  \h_\R^{ap} + \h_\R^{wm} $  where $(U_t)$  acts 
on $\h_\R^{ap}$  as an almost periodic action and on $\h_\R^{wm} $  as a weakly mixing action. 	
\end{defn}
With the help of the generator $A$, consider a new inner product $\langle \cdot,\cdot\rangle_{U}$ on $\h_{\C}$   as follows:
\begin{align*}
\inner{\xi}{\eta}_{U}= \inner{2(1+A^{-1})^{-1}\xi}{\eta}_{\h_{\C}}, \text{ for }\xi,\eta \in \h_{\C}.
\end{align*}
Let $\h$ be the completion of $\h_{\C}$ with respect to $\inner{\cdot}{\cdot}_{U}$. We denote the inner product and norm on $\h$ by $\inner{\cdot}{\cdot}_U$ and $\norm{\cdot}_{U}$ respectively.\\

Let $ r \in \N$ and $[r] =\{1,2,\dots,r\}$. Suppose  $N$ is either   $[r]$  or  $\N$. Fix a decomposition of $\h_{\R}$ as follows:
\begin{align*}
\h_{\R}:=\bigoplus\limits_{i\in N}\h_{\R}^{(i)}, 
\end{align*}
where $\h_{\R}^{(i)}$, $i\in N$, are non-trivial invariant subspaces of $(U_{t})_{t\in\R}$. Fix some real numbers $q_{ij}$ such that, $-1<q_{ij}=q_{ji}<1$ for $i,j\in N$ with $\sup\limits_{ i,j \in N} \abs{q_{ij}}<1 $. For fix $i,j\in N$, we define $T_{i,j}:\h_{\R}^{(i)}\otimes \h_{\R}^{(j)}\rightarrow \h_{\R}^{(j)}\otimes \h_{\R}^{(i)}$ to be the bounded extension of:
\begin{align*}
 \xi\otimes \eta\mapsto q_{ij} (\eta \otimes \xi), \text{ for }\xi\in \h_{\R}^{(i)},\eta \in \h_{\R}^{(j)}.
\end{align*}
Then, $T_\R:=\oplus_{i,j\in N} T_{i,j}\in \B(\h_\R\otimes\h_\R)$. Note that we can extend $T_\R$ to $\h_{\C}\otimes \h_{\C}$ linearly and denote the extension by $T_\C$.
By a simple density argument, it follows that $T_\C$ admits a unique bounded extension $T$ to $\h\otimes \h$ such that $T(\h^{(i)}\otimes\h^{(j)})\subset \h^{(j)}\otimes\h^{(i)}$, where $\h^{(i)}$'s are the completion of $\h_{\C}^{(i)}$ (for $i\in N$) with respect to the  deformed inner product $\inner{\cdot}{\cdot}_{U}$. It is easy to verify that $T:=\oplus_{i,j\in N}T_{ij}$, where $T_{ij}:\h^{(i)}\otimes\h^{(j)}\rightarrow\h^{(j)}\otimes\h^{(i)}$ is defined as the extension of the map:
\begin{align}\label{T action}
\xi\otimes \eta\mapsto q_{ij} (\eta \otimes \xi), \text{ for }\xi\in \h^{(i)},\eta \in \h^{(j)}.
\end{align}

Moreover, $T$ has the following properties:
\begin{align}\label{Eq.T}
&T^{*}=T,\quad\quad\quad\quad\quad(\text{since } q_{ij}=q_{ji}, \text{ for }i,j\in N); \\
\nonumber&\norm T_{\h\otimes\h}< 1,\,\,\,\quad\quad (\text{since}\sup_{i,j\in   N} \abs{q_{ij}}<1);\\
\nonumber&(1\otimes T)(T\otimes 1)(1\otimes T)=(T\otimes1)(1\otimes T)(T\otimes 1),
\end{align}
where $1\otimes T \text{ and }T\otimes 1$ are the natural amplifications of $T$ to $\h\otimes \h\otimes\h$. The third relation listed in Eq. \eqref{Eq.T} is referred as  Yang-Baxter equation.

Let $\F(\h):=\C\Omega\oplus\bigoplus\limits_{n=1}^{\infty}\h^{\otimes n}$ be the full Fock space of $\h$, where $\Omega$ is a distinguished unit vector $($vacuum vector$)$ in $\C$. By convention, $\h^{\otimes 0}:=\C\Omega$. The canonical inner product and norm on $\F(\h)$ will be denoted by $\inner{\cdot}{\cdot}_{\F(\h)}$ and $\norm{\cdot}_{\F(\h)}$ respectively. 
For $\xi\in \h$, let $a(\xi)$ and $a^{*}(\xi)$ denote the canonical left creation and annihilation operators acting on $\F(\h)$ which are defined as follows:
\begin{align}\label{gen op1}
&a(\xi)\Omega =\xi,\\
\nonumber& a(\xi)(\xi_{1}\otimes \xi_{2} \otimes\cdots \otimes \xi_{n})=\xi \otimes\xi_{1}\otimes\xi_{2}\otimes\cdots \otimes \xi_{n}\\ \text{ and},\ \ \
\nonumber& a^{*}(\xi)\Omega=0,\\
\nonumber&a^{*}(\xi)(\xi_{1}\otimes \xi_{2} \otimes\cdots \otimes \xi_{n})=\langle \xi ,\xi_{1}\rangle_{U} \xi_{2}\otimes\cdots \otimes \xi_{n},
\end{align}
where $\xi_{1}\otimes\cdots\otimes\xi_{n}\in\h^{\odot n}$ $(\h^{\odot n}$ denotes the $n$-fold algebraic tensor product of $\h)$ for $n\geq 1$. The operators $a(\xi)$ and $a^{*}(\xi)$ are bounded and adjoints of each other on $\F(\h)$.
Let $T_{i}$ be the operator acting on $\h^{\otimes(i+1)}$ for $i \in \N$ defined as follows:
\begin{align}\label{ExtndTDefn}
T_{i}:=\underset{i-1}{\underbrace{1\otimes\cdots\otimes 1}} \otimes T.
\end{align}
Extend $T_{i}$ to $\h^{\otimes n}$ for all $n>i+1$ by $T_{i}\otimes\underset{n-i-1}{\underbrace{1\otimes\cdots\otimes1}}$ and denote the extension again by $T_i$ with a slight abuse of notation.
Let $S_{n}$ denote the symmetric group of $n$ elements. Note that $S_1$ is trivial. For $n\geq 2$, let $\tau_{i}$ be the transposition between $i$ and $i+1$. It is  known that the set $\lbrace\tau_{i}\rbrace_{i=1}^{n-1}$  generates  $S_{n}$. 
For $n\in \N$, let $\pi:S_{n}\rightarrow\B(\h^{\otimes n})$ be the quasi-multiplicative map given by $\pi(1)=1$ and $\pi(\tau_{i})= T_{i}\text{ } (i=1,\dots,n-1)$. Consider $P^{(n)}\in\B(\h^{\otimes n})$, defined as follows:
\begin{align}\label{Twisted inner product}
P^{(n)}:=\underset{\sigma\in S_{n}}\sum \pi(\sigma).
\end{align}
By convention, $P^{(0)}$ on $\h^{\otimes0}$ is identity.
From the properties of $T$ in Eq. \eqref{Eq.T} and \cite[Theorem 2.3]{BSp94}, it follows that $P^{(n)}$ is a strictly positive operator for every $n\in \N$. Following \cite{BSp94}, the association 
\begin{align}\label{New Fock inner pdt}
\inner{\xi}{\eta}_{T}=\delta_{n,m}\inner{\xi}{P^{(n)}\eta}_{\F(\h)},\text{ for }\xi\in \h^{\otimes m},\eta\in \h^{\otimes n},
\end{align}
defines a definite sesquilinear form on $\F(\h)$ and let $\F_{T}(\h)$ denote the completion of $\F(\h)$ with respect to the norm on $\F(\h)$ induced by $\inner{\cdot}{\cdot}_T$. We denote the inner product and the norm on $\F_T(\h)$ by $\inner{\cdot}{\cdot}_T$ and $\norm{\cdot}_T$ respectively. We also denote $\F^{\text{finite}}_{T}(\h):=\text{ span }_\C \{\h^{\otimes n}, n\geq0\rbrace$ and $\h^{\otimes_T^n} =\overline{\h^{\otimes n}}^{\norm{\cdot}_T}$ for $n\in \N$.\\
Note that $\overline{\h}^{\norm{\cdot}_T}=\h$. Following \cite{BSp94}, for $\xi\in \h$, consider the $T$-deformed left creation and annihilation operators on $\mathcal{F}_T(\h)$ defined as follows: 
\begin{align}\label{gen op2}
l(\xi)&:=a(\xi),\text{ and, }\\
\nonumber l^{*}(\xi)&:=\begin{cases}a^{*}(\xi)(1+T_{1}+T_{1}T_{2}+\cdots +T_{1}T_{2}\cdots T_{n-1}), \text{ on }\h^{\otimes n};\\
0, \quad\text{ on }\C\Omega.\end{cases}
\end{align}
Then, $l(\xi)$ and $l^{*}(\xi)$ admit bounded extensions to $\mathcal{F}_T(\h)$ with $\norm{T}_{\h\otimes\h} = q < 1$, we have\\
\begin{align}
\norm{l(\xi)}\leq\norm{\xi}_{U}(1-q)^{-\frac{1}{2}},\\
\nonumber \norm{l^{*}(\xi)}\leq\norm{\xi}_{U}(1-q)^{-\frac{1}{2}}.
\end{align}

As the definition  of  $l^{*}(\xi)$   involves the operator $T$ (see Eq. \eqref{gen op2}), so, using the Eq. \eqref{T action}, the action of $l^{*}(\xi)$ on $\mathcal{F}_{T}(\h)$ can be described further. Indeed, let  $n\in \N$ and $\xi_{i_{k}}\in\h^{(i_{k})}$ for $i_{k}\in N$ and $1\leq k\leq n$. Then,
\begin{align}\label{left annihilation}
&l^{*}(\xi)\Omega=0, \text{ and, }\\ 
\nonumber&l^{*}(\xi)(\xi_{i_{1}}\otimes\cdots\otimes \xi_{i_{n}})=\underset{k=1}{ \overset{n}\sum}\langle \xi,\xi_{i_{k}}\rangle_{U}q_{i_{k}i_{k-1}}\cdots q_{i_{k}i_{1}}(\xi_{i_{1}}\otimes \xi_{i_{2}}\otimes\cdots\\
\nonumber&\quad\quad\quad\quad\quad\quad\quad\quad\quad\quad\quad\quad\quad\quad\quad\cdots \otimes \xi_{i_{k-1}}\otimes
\xi_{i_{k+1}}\otimes\cdots\otimes \xi_{i_{n}}).
\end{align}

$$ $$

Define 
\begin{align}\label{genop}
s(\xi):= l(\xi)+l^*(\xi), \text{ for }\xi \in \h_{\R}.
\end{align}

We denote the $C^*$-algebra generated by the self-adjoint operators $\{s(\xi):\xi\in\h_\R\}$ by  $\Gamma_{T}(\h_{\R},U_{t})$ and  $\Gamma_{T}(\h_{\R},U_{t})^{\prime\prime}\subseteq \mathbf{B}(\mathcal{F}_T(\h))$ becomes  the associated von Neumann algebra.  The von Neumann algebra  $\Gamma_{T}(\h_{\R},U_{t})^{\prime\prime}$ is called   \textit{mixed $q$-deformed Araki-Woods von Neumann algebra}. This von Neumann algebra is equipped with a vacuum state $\varphi$ given by $\varphi( \cdot ) = \varphi_T(\cdot)=\inner{\Omega}{\cdot\Omega}_T$. The vacuum vector $\Omega$ is a cyclic and separating vector for $\Gamma_{T}(\h_{\R},U_{t})^{\prime\prime}$. Also $\Gamma_{T}(\h_{\R},U_{t})^{\prime\prime}$ is in standard form as an algebra acting on $\mathcal{F}_T(\h)$. For the  simplicity of notation  we write $$M_T = M_T(\h_{\R}, U) = \Gamma_{T}(\h_{\R},U_{t})^{\prime\prime}.$$ 
If $\D_{\R} $ is  a $(U_t)$-invariant subspace of $\h_{\R}$, then we write 
$$M_T(\D_{\R}, U) = \Gamma_{T}(\D_{\R},U_{t})^{\prime\prime}.$$


For $\xi\in \h$, let $r(\xi)$ and $r^{*}(\xi)$ denote the deformed right creation and annihilation operators  acting on $\mathcal{F}_{T}(\h)$ and are defined as follows:
\begin{equation}\label{def b}
\begin{aligned}
&r(\xi)\Omega=\xi,
\\&r(\xi)(\xi_{i_1}\otimes \cdots \otimes \xi_{i_n})=\xi_{i_1}\otimes\cdots \otimes \xi_{i_n}\otimes \xi,\text{ and,}\\
&r^*(\xi)\Omega=0,\\ 
\nonumber &r^*(\xi)(\xi_{i_{1}}\otimes \cdots \otimes \xi_{i_{n}})=\underset{k=1}{ \overset{n}\sum}\langle \xi,\xi_{i_k}\rangle_{U}q_{i_ki_{k+1}}\cdots q_{i_{k}i_{n}}(\xi_{i_{1}}\otimes  \xi_{i_{2}}\otimes \cdots \otimes \xi_{i_{k-1}}\otimes \xi_{i_{k+1}}\otimes \cdots \otimes  \xi_{i_n}), 
\end{aligned}
\end{equation}
where $n\in\N$ and $\xi_{i_{k}}\in\h^{(i_{k})}$ for $i_{k}\in N$, $1\leqslant k\leqslant n$.
We write $$ d(\xi): = r(\xi) + r(\xi)^* \text{ for } ~\xi \in \h.$$
For $j\in N$, let
$(\h^{(j)}_{\R})^{\prime}=\lbrace \xi\in \h^{(j)}:\langle \xi,\eta\rangle_{U}\in \R \text{ for all } \eta \in \h_{\R}\rbrace$.
Note that $(\h^{(j)}_{\R})^{\prime}$ is a real subspace of $\h$ and by Hahn-Hellinger theorem it follows that $$\overline{(\h^{(j)}_{\R})^{\prime}+i(\h^{(j)}_{\R})^{\prime}}^{\norm{\cdot}_{U}}=\h^{(j)}.$$ 
Define $ \h^{\prime}_{\R}=\bigoplus \limits_{j\in N}(\h^{(j)}_{\R})^{\prime}$, then we have
\begin{align}\label{complement}
\h= \overline{ \h^{\prime}_{\R} + i \h^{\prime}_{\R} }^{\norm{\cdot}_{U}}  .
\end{align}
We recall the following results from \cite[see Section 3.2 and Theorem 3.14]{BKM20} . 
\begin{thm}\label{commutant thm}
	Let $\xi\in \mathfrak{D}(A^{-1})\cap\h_{\R}$, where $\mathfrak{D}(A^{-1})$ denotes the domain of $A^{-1}$. Then,
	\begin{enumerate}
		\item $ A^{-\frac{1}{2}}\xi \in \h_\R'$ and 
		\item $J_{\varphi}s(\xi)J_{\varphi}=d(A^{-\frac{1}{2}}\xi),$ where $J_{\varphi}$ is the modular conjugation operator.
	\end{enumerate}
	Moreover, $\Gamma_{T}(\h_{\R},U_{t})^{\prime}=\lbrace d(\xi):\xi \in \h^{\prime}_{\R}\rbrace^{\prime\prime}.$
\end{thm}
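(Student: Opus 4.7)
The plan is to apply Tomita--Takesaki theory to the cyclic and separating vector $\Omega$ for $M_T$, by first identifying the modular objects $\Delta_\varphi$ and $J_\varphi$ concretely on $\F_T(\h)$ and then reading off the conjugation formula for $s(\xi)$. The hard part will be pinning down $J_\varphi$ on all Fock levels, in particular getting the reversal of tensor factors to interact correctly with the braiding operator $T$ so that left-operator conjugations become right-operator actions.

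For part (1), I would argue directly from the definition of $\inner{\cdot}{\cdot}_U$. The complex conjugation $\J$ on $\h_\C$ commutes with every $U_t$ (since $U_t$ is real), hence $\J A \J = A^{-1}$. Combined with the algebraic identity $A^{-1/2}(1+A^{-1})^{-1} = A^{1/2}(1+A)^{-1}$, for $\xi \in \mathfrak{D}(A^{-1}) \cap \h_\R$ and $\eta \in \h_\R$, taking the conjugate of $\inner{A^{-1/2}\xi}{\eta}_U$ via $\overline{\inner{\alpha}{\beta}_{\h_\C}} = \inner{\J\alpha}{\J\beta}_{\h_\C}$ leaves it unchanged, so $\inner{A^{-1/2}\xi}{\eta}_U \in \R$. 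Since $A^{-1/2}$ preserves each $(U_t)$-invariant $\h^{(j)}$, decomposing along $\h_\R = \bigoplus_j \h_\R^{(j)}$ yields $A^{-1/2}\xi \in \bigoplus_j (\h_\R^{(j)})' = \h_\R'$.

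For part (2), I would determine the modular data explicitly. The closure of $S_0 : x\Omega \mapsto x^*\Omega$ has polar decomposition $J_\varphi \Delta_\varphi^{1/2}$. Since $s(\xi) = s(\xi)^*$ for $\xi \in \h_\R$, applying $S_0$ to $s(\xi)\Omega = \xi$ together with the appearance of $2(1+A^{-1})^{-1}$ in the deformed inner product forces $\Delta_\varphi|_{\h} = A^{-1}$. Extending to higher Fock levels by induction on particle number, using the Wick-style action of $s(\xi_1) \cdots s(\xi_n)$ on $\Omega$ and the $T$-twisted inner product in \eqref{Twisted inner product}, identifies $\Delta_\varphi$ with the second quantization of $A^{-1}$ on $\F_T(\h)$ and $J_\varphi$ with the antilinear isometry that reverses tensor factors while applying $A^{1/2}\J$ entrywise. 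Evaluating $J_\varphi l(\xi) J_\varphi$ and $J_\varphi l^*(\xi) J_\varphi$ on elementary tensors then matches the formulas in \eqref{def b} for $r(A^{-1/2}\xi)$ and $r^*(A^{-1/2}\xi)$, giving $J_\varphi s(\xi) J_\varphi = d(A^{-1/2}\xi)$.

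For the commutant identity, Tomita's theorem gives $M_T' = J_\varphi M_T J_\varphi$, so part (2) yields $\{d(\eta) : \eta \in A^{-1/2}(\mathfrak{D}(A^{-1}) \cap \h_\R)\}'' \subseteq M_T'$. Since $A^{-1/2}(\mathfrak{D}(A^{-1}) \cap \h_\R)$ is $\norm{\cdot}_U$-dense in $\h_\R'$ by \eqref{complement} and $r(\cdot), r^*(\cdot)$ satisfy norm estimates analogous to those for $l(\cdot), l^*(\cdot)$, continuity upgrades this to $\{d(\xi) : \xi \in \h_\R'\}'' \subseteq M_T'$. The reverse inclusion is the direct check $[s(\xi), d(\eta)] = 0$ for $\xi \in \h_\R$ and $\eta \in \h_\R'$ on the dense subspace $\F_T^{\text{finite}}(\h)$, performed by expanding the operators via \eqref{left annihilation} and \eqref{def b} and using the reality $\inner{\eta}{\zeta}_U \in \R$ for $\zeta \in \h_\R$ to cancel the cross terms between the $l/r$-pairs.
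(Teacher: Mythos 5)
First, a point of comparison: the paper does not prove this statement at all — it is recalled verbatim from \cite[Section 3.2 and Theorem 3.14]{BKM20} — so your proposal can only be measured against the strategy of that cited source, which it broadly reconstructs: compute the modular data of $\Omega$ explicitly (one-particle modular operator $A^{-1}$, second quantization on $\F_T(\h)$, reversal-type conjugation compatible with $T$), deduce $J_\varphi s(\xi)J_\varphi = d(A^{-1/2}\xi)$, and then identify the commutant. Your part (1) is correct, and part (2) is the right mechanism, modulo one slip: the one-particle modular conjugation is $\J A^{1/2} = A^{-1/2}\J$, not $A^{1/2}\J = \J A^{-1/2}$; with the formula as you wrote it, you would get $J_\varphi l(\xi)J_\varphi = r(A^{1/2}\xi)$, contradicting your own stated conclusion $r(A^{-1/2}\xi)$.

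The genuine gap is in the commutant identity. Write $\mathcal{N} = \lbrace d(\eta):\eta\in\h_{\R}'\rbrace''$. You establish the inclusion $\mathcal{N}\subseteq M_T'$ twice — once via $d(A^{-1/2}\xi) = J_\varphi s(\xi)J_\varphi\in J_\varphi M_T J_\varphi = M_T'$ plus density, and once via the commutation check $[s(\xi),d(\eta)]=0$, which you mislabel as "the reverse inclusion" — but a commutation check of generators can only ever prove containment of $\mathcal{N}$ in the commutant, never that $\mathcal{N}$ exhausts $M_T'$. So $M_T'\subseteq\mathcal{N}$, the substantive half of the theorem, is never proved. The repair uses Tomita's theorem in the opposite direction: $M_T' = J_\varphi M_T J_\varphi$ is generated by $\lbrace J_\varphi s(\xi)J_\varphi : \xi\in\h_{\R}\rbrace$; for $\xi$ in the dense subspace $\mathfrak{D}(A^{-1})\cap\h_{\R}$, parts (1) and (2) give $J_\varphi s(\xi)J_\varphi = d(A^{-1/2}\xi)\in\mathcal{N}$; since $\xi\mapsto s(\xi)$ is bounded in operator norm by $2(1-q)^{-1/2}\norm{\xi}_U$ and $\mathcal{N}$ is norm closed, density of $\mathfrak{D}(A^{-1})\cap\h_{\R}$ in $\h_{\R}$ yields $J_\varphi s(\xi)J_\varphi\in\mathcal{N}$ for every $\xi\in\h_{\R}$, hence $M_T'\subseteq\mathcal{N}$. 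With this corrected architecture you also no longer need the claim that $A^{-1/2}\bigl(\mathfrak{D}(A^{-1})\cap\h_{\R}\bigr)$ is $\norm{\cdot}_U$-dense in $\h_{\R}'$ — a claim that in any case does not follow from Eq. \eqref{complement} as you assert — since the inclusion $\mathcal{N}\subseteq M_T'$ is exactly what your direct commutation check delivers for all $\eta\in\h_{\R}'$.
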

Since $\Omega$ is cyclic and separating for $\Gamma_{T}(\h_{\R},U_{t})^{\prime\prime}$  (see \cite[Proposition. 3.5]{BKM20}), for $\zeta\in \Gamma_{T}(\h_{\R},U_{t})^{\prime\prime}\Omega$ and $\eta \in \Gamma_{T}(\h_{\R},U_{t})^{\prime}\Omega$, there exists a unique operators $s(\zeta)$ in $\Gamma_{T}(\h_{\R},U_{t})^{\prime\prime}$ and $d(\eta)$ in $\Gamma_{T}(\h_{\R},U_{t})^{\prime}$ such that $s(\zeta)\Omega=\zeta$ and $d(\eta)\Omega=\eta$. The Wick product formula in \cite[Lemma 4.1]{BKM20} provides a natural  expression for $s(\xi_{t_1}\otimes \cdots \otimes \xi_{t_n})$ $($resp. $d(\eta_{t_1} \otimes\cdots \otimes \eta_{t_n}))$ for $\xi_{t_{k}}\in\h_{\C}^{(t_{k})}$ $($resp. $\eta_{t_{k}}\in (\h^{(t_{k})}_{\R})^{\prime}+i(\h^{(t_{k})}_{\R})^{\prime})$, $t_{k}\in N$, $1\leqslant k\leqslant n$, in terms of the operators $l(\xi_{t_{k}})$ and  $l^{*}(\xi_{t_{k}})$ $($resp.  $r(\eta_{t_{k}})$ and $r^{*}(\eta_{t_{k}}))$ as follows. 

\begin{lem}\cite[Lemma 4.1]{BKM20}\label{Wick pdt formula}
	Fix $n\in \N$. Let $\xi_{t_{k}}\in\h_{\C}^{(t_{k})}$, $\eta_{t_{k}}\in(\h^{(t_{k})}_{\R})^{\prime}+i(\h^{(t_{k})}_{\R})^{\prime}$ for $t_{k}\in N$, $1\leqslant k\leqslant n$. Then,
	\begin{align*}
	(i)\quad s(\xi_{t_1}\otimes \cdots \otimes \xi_{t_n})=\sum_{\substack{ l,m\geqslant0 \\ l+m=n}}\sum_{\substack{I=\lbrace i(1),\cdots ,i(l) \rbrace,\, i(1)< \cdots < i(l)\\ 
			J=\lbrace j(1),\cdots,j(m)\rbrace,\,j(1)< \cdots < j(m)\\ I\cup J=\lbrace 1,\cdots ,n\rbrace\\I\cap J=\emptyset}}f_{(I,J)}(q_{ij})l(\xi_{t_{i{(1)}}})\cdots l(\xi_{t_{i{(l)}}})\\
	\nonumber\quad\quad l^*(\mathcal{J}\xi_{t_{j{(1)}}})\cdots l^*(\mathcal{J}\xi_{t_{j{(m)}}}),\\
	(ii)\quad d(\eta_{t_1}\otimes \cdots \otimes \eta_{t_n})
	=\sum_{\substack{ l,m\geqslant0 \\ l+m=n}}\sum_{\substack{I=\lbrace i(1),\cdots ,i(l) \rbrace,\, i(1)< \cdots < i(l)\\
			J=\lbrace j(1),\cdots,j(m)\rbrace,\,j(1)< \cdots < j(m)\\
			I\cup J=\lbrace 1,\cdots ,n\rbrace\\
			I\cap J=\emptyset}}\widetilde{f}_{(I,J)}(q_{ij})r(\eta_{t_{i{(1)}}})\cdots r(\eta_{t_{i{(l)}}})\\
	\nonumber \quad\quad\quad\quad\quad\quad\quad\quad\quad\quad\quad\quad\quad\quad\quad\quad\quad\quad r^*(\mathcal{J}_{r}(\eta_{t_{j{(1)}}}))\cdots r^*(\mathcal{J}_{r}(\eta_{t_{j(m)}})),
	\end{align*}
	where $f_{(I,J)}(q_{ij})=\underset{\lbrace (r,s):\text{ }1\leqslant r\leqslant l,1\leqslant s\leqslant m,\text{ } i(r)>j(s)\rbrace}\prod  q_{t_{i(r)}t_{j(s)}}, $\\
	$\quad \quad\widetilde{f}_{(I,J)}(q_{ij})= \underset{\lbrace (r,s):\text{ }1\leqslant r\leqslant l,1\leqslant s\leqslant m,\text{ } i(r)<j(s)\rbrace}\prod  q_{t_{i(r)}t_{j(s)}}$, $\mathcal{J}$ and $\mathcal{J}_{r}$ denote the complex conjugations  on $\h_{\C}$ and $\h^{\prime}_{\R}
	+i\h^{\prime}_{\R}$ respectively.
\end{lem}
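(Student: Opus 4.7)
The plan is to verify triviality of $B_\varphi(M_T)$ through the ultraproduct identity \eqref{rela}, namely $B_\varphi(M_T) = ((M_T^\omega)^{\varphi^\omega})' \cap M_T$; concretely, for any $a\in M_T$ I must exhibit enough elements of the centralizer of the ultrapower that commute with $a$ only when $a$ is scalar. Following the author's roadmap, the proof splits according to the decomposition $\h_\R = \h_\R^{ap} \oplus \h_\R^{wm}$ into the cases $\dim \h_\R^{ap} \geq 3$ and $\dim \h_\R^{ap} \leq 2$, where the second forces $\h_\R^{wm}$ to be substantial by the type $\mathrm{III}_1$ hypothesis together with the remark after the theorem.

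First suppose $\dim \h_\R^{ap} \geq 3$. Since $M_T$ is of type $\mathrm{III}_1$, the eigenvalues of the analytic generator $A$ generate $\R_+^\times$; in particular I can pick an eigenvalue $\lambda\neq 1$ and, thanks to dimension at least three, orthonormal eigenvectors sitting in several of the invariant components $\h_\R^{(i)}$. The Wick product formula (Lemma \ref{Wick pdt formula}) writes $s(\xi_\lambda \otimes \J\xi_\lambda)$, and related symmetric Wick words, as polynomials in $l(\cdot)$ and $l^*(\cdot)$ with vanishing modular weight, placing them in the centralizer $M_T^\varphi$. From these I construct a sequence of unitaries $(u_n)\subset M_T^\varphi$ that converges to $0$ weakly in $M_T$; its ultralimit lies in $(M_T^\omega)^{\varphi^\omega}$, and a direct Fock space computation using Lemma \ref{Wick pdt formula} forces any $a$ commuting with this family to have $a\Omega\in\C\Omega$, hence $a=\varphi(a)1$ by cyclicity/separability of $\Omega$.

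In the remaining case $\dim \h_\R^{ap} \leq 2$, I adapt the strategy of \cite{HI20}. The key simplification, flagged by the author, is that the recent factoriality results (\cite{BKM23}, \cite{MK1}) guarantee that $M_T^\varphi$ is a diffuse von Neumann algebra, and the weakly mixing behaviour of $(U_t)|_{\h_\R^{wm}}$ allows one to exhibit inside $M_T^\varphi$ a sequence of unitaries $(u_n)$ converging weakly to $0$. As before, $(u_n)^\omega \in (M_T^\omega)^{\varphi^\omega}$, and the Wick product formula combined with the commutant description in Theorem \ref{commutant thm} shows that $[a,(u_n)^\omega]=0$ for all such sequences forces $a\Omega \in \C\Omega$ and therefore $a\in\C1$.

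The principal obstacle will be the second case: producing a weakly null sequence of centralizer unitaries when the almost periodic part is small, which requires the construction to live predominantly on $\h_\R^{wm}$, where modular-weight vanishing is achieved only in a limiting sense rather than exactly. Here the Wick expansion of Lemma \ref{Wick pdt formula} together with the commutant formula of Theorem \ref{commutant thm} is the essential tool: it converts the weak mixing of $(U_t)|_{\h_\R^{wm}}$ into weak convergence to $0$ of appropriate products of the $s(\cdot)$ and their modular-conjugated counterparts $d(\cdot)$. Once such a sequence has been produced, the transfer into the ultrapower centralizer and the deduction of triviality of the relative commutant in $M_T$ follow a routine adaptation of the Houdayer-Isono template \cite{HI17,HI20}, now considerably shorter than the original argument thanks to the factoriality input.
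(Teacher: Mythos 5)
Your proposal does not prove the statement in question. The statement is Lemma~\ref{Wick pdt formula}, the Wick product formula: a purely algebraic/combinatorial identity expressing $s(\xi_{t_1}\otimes\cdots\otimes\xi_{t_n})$ (resp.\ $d(\eta_{t_1}\otimes\cdots\otimes\eta_{t_n})$) as an explicit sum, over ordered splittings $\{1,\dots,n\}=I\sqcup J$, of products of creation and annihilation operators weighted by the coefficients $f_{(I,J)}(q_{ij})$ (resp.\ $\widetilde{f}_{(I,J)}(q_{ij})$). What you have written instead is a proof sketch of the paper's main result, Theorem~\ref{main-theorem}, on triviality of the bicentralizer: the ultraproduct identity for $B_\varphi(M_T)$, the case split on $\dim\h_\R^{ap}$, the construction of weakly null centralizer unitaries, and so on. None of this bears on the identity to be proved; worse, your argument repeatedly \emph{invokes} Lemma~\ref{Wick pdt formula} as a tool, so read as a proof of that lemma it is circular. (The paper itself gives no proof here either --- it cites \cite[Lemma 4.1]{BKM20} --- but that does not change the fact that your text addresses a different statement.)

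A genuine proof would have to proceed along the following lines. Since $\Omega$ is separating for $\Gamma_T(\h_\R,U_t)''$, the operator $s(\zeta)$ is characterized as the unique element of $\Gamma_T(\h_\R,U_t)''$ with $s(\zeta)\Omega=\zeta$; so one must check two things for the right-hand side $W$ of the claimed formula: first, that $W\Omega=\xi_{t_1}\otimes\cdots\otimes\xi_{t_n}$ --- this is immediate because every summand with $m\geq 1$ annihilation operators kills $\Omega$ (as $l^*(\cdot)\Omega=0$), leaving only the term $I=\{1,\dots,n\}$, $J=\emptyset$, whose coefficient is the empty product $1$ and which gives $l(\xi_{t_1})\cdots l(\xi_{t_n})\Omega=\xi_{t_1}\otimes\cdots\otimes\xi_{t_n}$; and second, that $W$ actually lies in $\Gamma_T(\h_\R,U_t)''$, which is the real content. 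The latter is typically done by induction on $n$, multiplying a length-$(n-1)$ Wick word by $s(\xi_{t_1})=l(\xi_{t_1})+l^*(\mathcal{J}\xi_{t_1})$ and using the deformed commutation relations (Eq.~\eqref{left annihilation}) to push annihilation operators past creation operators; it is exactly this reordering that produces the factors $q_{t_{i(r)}t_{j(s)}}$ for the inverted pairs $i(r)>j(s)$ recorded in $f_{(I,J)}(q_{ij})$. Part (ii) is proved symmetrically with right operators, where the inversion condition becomes $i(r)<j(s)$. Your proposal contains none of these steps, so it has to be counted as a complete miss rather than a gap in an otherwise correct argument.
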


\subsection{Ultraproducts of von Neumann algebras}\label{ultra}
We require some basics of ultraproduct of Hilbert spaces and von Neumann algebras for our analysis of bicentralizer. Therefore, in this subsection, we recall the notions of ultraproducts of von Neumann algebras from \cite{AH14}. Originally the ultraproducts of von Neumann algebras was defined for tracial von Neumann algebras. In the case of type III von Neumann algebras, there are two notions of ultraproducts. One is given by Ocneanu and other one  by Groh- Raynaud. Ocneanu's ultraproduct is a generalisation  of ultraproducts of tracial von Neumann algebras. In this article use the 
Ocneanu's ultraproducts of von Neumann algebras.\\

Before proceeding to the ultraproduct of von Neumann algebras, we briefly recall the ultraproduct of Banach spaces. Fix a free ultrafilter $\omega$ on $\N$. Recall that, $\omega\in\beta\N\setminus\N$, where $\beta\N$ is the Stone-Cech compactification of $\N$. Let $\ell^{\infty}(\N,E_n)$ be the set of all sequences $(x_n)_n\in \displaystyle\prod_{n\in\N} E_n$ with $\sup_{n\in\N}\norm{x_n}< \infty$. Clearly, $\ell^{\infty}(\N,E_n)$ becomes a Banach space  with the norm given by $\norm{(x_n)_n} = \sup_{n\in\N}\norm{x_n}$, for $(x_n)_n\in\ell^{\infty}(\N,E_n)$. 
Let $\mathcal{I}_{\omega}$ denote the closed subspace of all $(x_n)_n\in\ell^{\infty}(\N,E_n)$ which satisfies
$\lim_{n\rightarrow\omega}\norm{x_n} = 0.$ Then the Banach space ultraproduct $(E_n)_{\omega}$ is defined as the quotient space $\ell^{\infty}(\N,E_n)/I_{\omega}$. Any element of $(E_n)_{\omega}$ represented by $(x_n)_n\in\ell^{\infty}(\N,E_n)$ is written as $(x_n)_{\omega}$. If $(H_n)_{n\in\N}$ is a sequence of Hilbert spaces, then the Banach space ultraproduct $H_{\omega}:=(H_n)_{\omega}$ is again a Hilbert space with the inner product defined by $\inner{(\xi_n)_{\omega}}{(\eta_n)_{\omega}}:= \lim_{n\rightarrow\omega}\inner{\xi_n}{\eta_n}$, for $(\xi_n)_{\omega},(\eta_n)_{\omega}\in (H_n)_{\omega}$.

We first introduce the Ocneanu ultraproduct of a family of von Neumann algebras with normal faithful states $(M_n,\varphi_n)$ along the free ultrafilter $\omega$  on $\N$.  Consider the spaces 
\begin{align*}
\ell^{\infty}(\N,M_n):=\set{(x_n)_n\in\prod_{n\in\N}M_n:\ \sup_{n}\norm{x_n}<\infty},
\end{align*}
and 
\begin{align*}
\I_{\omega}:=\set{(x_n)_n\in\ell^{\infty}(\N,M_n):\ \lim_{n\rightarrow\omega}\norm{x_n}^{\sharp}=0},
\end{align*}
where $\norm{x_n}^{\sharp}=\varphi_n(x_n^{*}x_n+x_nx_n^{*})^{1/2}$. Define 
\begin{align*}
\M^{\omega}(M_n,\varphi_n):=\set{(x_n)_n\in\ell^{\infty}(\N,M_n):\ (x_n)_n\I_{\omega}\subset\I_{\omega},\ \text{and}\ \I_{\omega}(x_n)_n\subset\I_{\omega}}.
\end{align*}
Note that, $\M^{\omega}(M_n,\varphi_n)$ is $C^*$-algebra in which $\I_{\omega}$ is a closed (two-sided) ideal. Hence, we can consider the quotient $C^*$-algbera
\begin{align*}
(M_n,\varphi_n)^{\omega}:=\M^{\omega}(M_n,\varphi_n)/\I_{\omega}.
\end{align*}
By the same proof as in \cite[\textsection 5.1]{Oc85}, $(M_n,\varphi_n)^{\omega}$ turns out be a von Neumann algebra, and called the Ocneanu ultraproduct of the sequence $(M_n,\varphi_n)_{n\in\N}$. If  all the $ (M_n, \varphi_n)$ are  $(M, \varphi) $ ( a von Neumann $M$ with faithful normal state $\phi$),  then we write 
$$ M^\omega :=\M^{\omega}(M,\varphi)/\I_{\omega}.$$
The equivalence class of a sequence $(x_n)_{n\in\N}\in\M^{\omega}(M_n,\varphi_n)$ in the quotient algebra is denoted by $(x_n)^{\omega}$. The following defines a normal faithful state $(\varphi_n)^{\omega}$ on $(M_n,\varphi_n)^{\omega}$: 
\begin{align*}
(\varphi_n)^{\omega}((x_n)^{\omega}):=\lim_{n\rightarrow\omega}\varphi_n(x_n), \quad\text{for}\quad (x_n)^{\omega}\in(M_n,\varphi_n)^{\omega}.\\
\end{align*}

For the Groh-Raynaud ultraproduct, suppose 
 $(M_n,\varphi_n)$ is a sequence of von Neumann algebras equipped with normal faithful states. For each $n\in\N$, we can realize $M_n\subset B(H_n)$, using GNS representation of $M_n$ with respect to $\varphi_n$.  Let $(M_n)_{\omega}$ be the Banach space ultraproduct of $(M_n)_{n\in\N}$. Recall that, $(M_n)_{\omega}$ is $C^*$-algebra with respect to the norm $\norm{(x_n)_{\omega}}=\displaystyle\lim_{n\rightarrow\omega}\norm{x_n}$. Also, let $H_{\omega}$ be the Banach space ultraproduct of the sequence $(H_n)_{n\in\N}$. Consider the  diagonal action $\pi_{\omega}:(M_n)_{\omega}\rightarrow B(H_{\omega})$ defined by 
\begin{equation}
\pi_{\omega}((a_n)_{\omega})(\xi_n)_{\omega}:=(a_n\xi_n)_{\omega}.
\end{equation}
It can be easily checked that, this action is a well-defined $*$-homomorphism and 
\begin{equation*}
\norm{\pi_{\omega}((a_n)_{\omega})(\xi_n)_{\omega}}=\displaystyle\lim_{n\rightarrow\omega}\norm{a_n}=\norm{(a_n)_{\omega}}. 
\end{equation*}
Hence, $\pi_{\omega}$ is an injective $*$-homomorphism. The Groh- Raynaud ultraproduct is defined as the weak closure of $\pi_{\omega}((M_n)_{\omega})$ inside $B(H_{\omega})$ and  denoted by $\prod^{\omega}(M_n,\varphi_n)$. The ultraproduct state on the Groh- Raynaud  ultraproduct, denoted by $\varphi_{\omega}$, is a vector state given by the ultraproduct of the cyclic vectors for the GNS representations of algberas $M_n$, i.e.
\begin{equation} \varphi_{\omega}(x):=\inner{\xi_{\omega}}{x\xi_{\omega}},\quad \text{for}\quad x\in\prod^{\omega}(M_n,\varphi_n),
\end{equation}
 where $\xi_{\omega}:=(\xi_n)_{\omega}\in(\h_n)_{\omega}$.

\section{ Bicentralizer of $M_T$ when $\h_{\R}^{ap} \neq 0$}
In this section, we show that  bicentralizer of $M_T$ is trivial when it is type  $\mathrm{III}_1$  and $\h_{\R}^{ap} \neq 0$. 
Suppose  $\EssD_{\R}\subset\h_{\R}$  is  $(U_t )_{t\in\R}$-invariant subspace, then we consider  $M_T (\EssD_{\R}, U ) := \Gamma_T (\D_{\R} , U_t )^{\prime\prime}$. We begin with the following  lemma which will help our analysis in the sequel.


\begin{lem}\label{unitary}
	Suppose  $(\h_{\R}, U_t)$ has non-trivial almost periodic part, i.e, $\h_{\R}^{ap} \neq 0$ and $ \EssD_\R \subseteq 	\h_{\R}^{ap}$ be a   $(U_t )$-invariant subspace.  Then there exists a sequence of unitaries 
 $(u_n) \subseteq  M_T(\EssD_{\R} ) \cap M_T^\varphi $ such that $ u_n \rightarrow 0$ as $n \rightarrow \infty$  in w.o.t. 	
\end{lem}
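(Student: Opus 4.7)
The plan is to construct a Haar unitary $u\in M_T(\EssD_{\R})\cap M_T^\varphi$ and take $u_n:=u^n$. First I would reduce to a simple $(U_t)$-invariant subspace $\V_\R\subseteq\EssD_\R$: if the analytic generator $A$ admits an eigenvalue $\lambda\neq 1$ on the complexification of $\EssD_\R$, choose $\V_\R$ to be the $2$-dimensional real subspace spanned by $\RE e$ and $\IM e$ for an eigenvector $e$ with $Ae=\lambda e$; otherwise $(U_t)$ acts trivially on $\EssD_\R$ and I pick any $1$-dimensional $\V_\R\subseteq\EssD_\R$. Since $\V_\R$ is $(U_t)$-invariant and $\sigma_t^\varphi(s(\xi))=s(U_t\xi)$, $M_T(\V_\R)$ is globally $\sigma_t^\varphi$-invariant, so $M_T(\V_\R)\cap M_T^\varphi\subseteq M_T(\EssD_\R)\cap M_T^\varphi$ and it is enough to find the unitaries inside the smaller algebra.

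Next I would show that $N:=M_T(\V_\R)\cap M_T^\varphi$ is diffuse with tracial state $\tau:=\varphi|_N$. In the trivial case $\varphi$ restricts to a trace on $M_T(\V_\R)=W^*(s(\xi))$, and the deformed semicircular $s(\xi)$ has continuous spectral distribution, so $N$ is diffuse abelian. In the $\lambda\neq 1$ case, the Wick product formula gives $s(e)=l(e)+l^*(\J e)\in M_T(\V_\R)$ and a direct modular-weight computation (using $\sigma_t^\varphi(l(\xi))=l(A^{-it}\xi)$ and $\J A=A^{-1}\J$) yields $\sigma_t^\varphi(s(e))=\lambda^{-it}s(e)$; hence $s(e)^*s(e)\in N$. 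I would then verify that the distribution of this positive element with respect to $\varphi$ is continuous, either via a moment computation against the twisted Fock inner product $P^{(n)}$, or by invoking the structural results of \cite{BKM23,MK1} on the centralizer of the $2$-dimensional almost periodic mixed $q$-Araki-Woods algebra. In either case $W^*(s(e)^*s(e))\subseteq N$ is diffuse abelian, so functional calculus produces a Haar unitary $u\in N$.

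Finally, to verify $u_n=u^n\to 0$ in the weak operator topology on $\F_T(\h)$, standard-form representation reduces this to checking $\varphi(yu_n)\to 0$ for every $y\in M_T$. By Takesaki's theorem, the fixed-point algebra $M_T^\varphi$ of $(\sigma_t^\varphi)$ admits a normal faithful $\varphi$-preserving conditional expectation $E:M_T\to M_T^\varphi$, and since $u_n\in M_T^\varphi$,
\begin{align*}
\varphi(yu_n)=\varphi(E(y)u_n)=\tau(E(y)u^n)\To 0,
\end{align*}
the last limit being the Riemann-Lebesgue property of the Haar unitary $u$ inside the finite tracial algebra $M_T^\varphi$ (applied via the further $\tau$-preserving conditional expectation onto $W^*(u)\cong L^\infty(\mathbb{T})$). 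The main obstacle I anticipate is the diffuseness of $N$ in the $\lambda\neq 1$ case---i.e., ruling out atoms in the spectrum of $s(e)^*s(e)$---which needs either the combinatorial moment calculation indicated above (delicate because of the interplay between $T$, $P^{(n)}$ and the $q_{ij}$-structure) or a careful appeal to the cited structure theorems.
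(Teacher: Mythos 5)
Your overall strategy (produce a Haar unitary in $M_T(\D_\R)\cap M_T^\varphi$ and take its powers) is sound, and your final weak-convergence argument via the $\varphi$-preserving expectation onto $M_T^\varphi$ and the Riemann--Lebesgue property is correct, though more laborious than needed: once one has \emph{any} diffuse von Neumann subalgebra of $M_T(\D_\R)\cap M_T^\varphi$, the existence of a sequence of unitaries tending weakly to $0$ is automatic. The genuine gap is exactly the point you flag yourself: in the case where $A$ has an eigenvalue $\lambda\neq 1$ on $\D_\C$, you never establish that $W^*(s(e)^*s(e))$ is diffuse. The combinatorial moment computation against $P^{(n)}$ is not carried out, and your fallback --- ``invoking the structural results of \cite{BKM23,MK1}'' --- does not deliver the statement you attribute to it: those results give that the centralizer $(M_T(\V_\R))^\varphi$ is diffuse (indeed a $\mathrm{II}_1$ factor), not that the particular singly generated subalgebra $W^*(s(e)^*s(e))$ is atomless. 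So the central step of your argument is unproved as written.

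The repair is to discard the explicit element $s(e)^*s(e)$ altogether, and this is what the paper does. For $\dim(\D_\R)\geq 2$ it quotes \cite{BKM23} and \cite{MK1} directly: $M_T(\D_\R)$ is a diffuse non-type $\mathrm{I}$ factor and $(M_T(\D_\R))^\varphi$ is a $\mathrm{II}_1$ factor, hence diffuse, hence contains a sequence of unitaries converging weakly to $0$; since $(M_T(\D_\R))^\varphi\subseteq M_T^\varphi$ (via the $\varphi$-preserving conditional expectation of Remark \ref{condi}), this settles that case. For $\dim(\D_\R)=1$ it uses \cite[Theorem 4.11]{BKM20}: $M_T(\D_\R)$ is diffuse abelian and, since $U_t$ fixes the generating vector, is contained in $M_T^\varphi$. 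Your reduction to a one- or two-dimensional invariant subspace is therefore an unnecessary detour, and in the two-dimensional case it does not make the problem easier --- you still need the structure of the centralizer from the cited papers, at which point you may as well apply that result to $\D_\R$ itself.
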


\begin{proof}
We first assume that 	$ \text{dim}(\EssD_\R ) = 1$, so, there exists a vector $ \xi_0 \in \D_{\R}$ such that $ \D_{\R} = \R \xi_0 $ and $U_t \xi_0 = \xi_0$ for all $ t \in \R$. Then it follows from  \cite[ Theorem 4.11]{BKM20} that  $M_T(\EssD_\R  )$ is   a diffuse abelian von Neumann algebra and since $U_t \xi_0 = \xi_0$ for all $ t \in \R$, so,   $M_T(\EssD_\R  ) \subseteq  M_T^\varphi $. Hence, in this case  such sequence always exists.

Now suppose  	$ \text{dim}(\EssD_\R ) \geq 2$. 
Then it is established  (see \cite{BKM23}, \cite{MK1}) that   $M_T(\EssD_\R  )$  is  a non-type  $\mathrm{I}$ factor  and it is a diffuse von Neumann algebra and 	$( M_T(\EssD_\R  )^\varphi $ is also a factor. Indeed, it is a  $\mathrm{II}_1$ factor (see \cite{BKM23}, \cite{MK1}).
So, it is a diffuse von Neumann algebra, thus there exists a sequence of unitaries  $(u_n) \subseteq ( M_T(\EssD_{\R} ))^\varphi $ such that $ u_n \rightarrow 0$ as $n \rightarrow \infty$  in w.o.t and since  $( M_T(\EssD_{\R}) )^\varphi \subseteq M_T ^\varphi$, so $(u_n) \subseteq M_T^\varphi$.
\end{proof}

The following lemma will be crucial for our  analysis.
\begin{lem}\label{inva}
Let $ \D_\R \subseteq \h_\R $ be  a $(U_t)$-invariant subspace. Suppose 
$\D_\C$ is the complexification of $\D_{\R}$ and consider $\D= \overline{\D_\C}^{ \norm{ \cdot}_U} $.
Then $$\F_T(\D)^\perp  = \overline{\text{span}\{  \xi_1\otimes\cdots\otimes\xi_n: ~ n\in \N,   \xi_i \in \h_\R' \text{ for all } i \in [n] \text{ and } \xi_j \in \D^\perp \text{ for some } j \in [n]  \}}^{ \norm{ \cdot}_U},$$
where $[n] = \{ 1, 2, \cdots, n\}$.
\end{lem}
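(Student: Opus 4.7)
My plan is to prove the equality level-by-level, exploiting the grading $\F_T(\h) = \bigoplus_{n\ge 0}\h^{\otimes_T^n}$ that both sides respect. On each degree $n$, the task reduces to showing that the $T$-orthogonal complement of $\D^{\otimes_T^n}$ inside $\h^{\otimes_T^n}$ equals the $T$-closure of
\[
K_n := \mathrm{span}\bigl\{\xi_1\otimes\cdots\otimes\xi_n : \xi_i\in\h'_\R,\ \xi_j\in\D^\perp \text{ for some } j\in[n]\bigr\}.
\]

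For the inclusion $K_n\subseteq(\D^{\otimes_T^n})^\perp$, I would fix a generator $\xi_1\otimes\cdots\otimes\xi_n$ with $\xi_j\in\D^\perp$ and test it against any pure tensor $\zeta_1\otimes\cdots\otimes\zeta_n\in\D^{\otimes n}$ via \eqref{New Fock inner pdt}:
\[
\inner{\xi_1\otimes\cdots\otimes\xi_n}{\zeta_1\otimes\cdots\otimes\zeta_n}_T = \sum_{\sigma\in S_n}\inner{\xi_1\otimes\cdots\otimes\xi_n}{\pi(\sigma)(\zeta_1\otimes\cdots\otimes\zeta_n)}_{\F(\h)}.
\]
By \eqref{T action}, each $T_i$ acts on a fibered tensor by swapping positions $i,i+1$ with a scalar $q_{ab}$, so after expanding $\xi_i,\zeta_i$ along the fiber decomposition $\h=\bigoplus_a\h^{(a)}$ every summand becomes a $q$-scalar times $\prod_i\inner{\xi_i}{\zeta_{\sigma^{-1}(i)}}_U$; the factor at index $j$ vanishes since $\xi_j\in\D^\perp$ and $\zeta_{\sigma^{-1}(j)}\in\D$. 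The whole sum is therefore zero, and this extends to all of $\F_T(\D)$ by bilinearity and $T$-continuity.

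For the reverse inclusion, I would use density. Equation \eqref{complement} gives $\h=\overline{\h'_\R+i\h'_\R}^{\norm{\cdot}_U}$; since $P^{(n)}$ is bounded, simple tensors of $\h'_\R$-vectors are $T$-dense in $\h^{\otimes_T^n}$. Decomposing such a tensor through the $U$-orthogonal projections $P_\D,P_{\D^\perp}$,
\[
\xi_1\otimes\cdots\otimes\xi_n = \sum_{S\subseteq[n]}\bigotimes_{i=1}^n Q_i^{(S)}\xi_i, \qquad Q_i^{(S)}=\begin{cases}P_{\D^\perp},&i\in S,\\ P_\D,&i\notin S,\end{cases}
\]
the summand $S=\emptyset$ lies in $\D^{\otimes n}\subseteq\F_T(\D)$, while every other summand has at least one $\D^\perp$-factor and hence lies in $K_n$. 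An element $\eta\in\F_T(\D)^\perp\cap\h^{\otimes_T^n}$ can then be approximated in $T$-norm by such pure tensors; the $\D^{\otimes n}$-parts are killed in the limit by orthogonality of $\eta$ to $\F_T(\D)$, leaving only $K_n$-parts and placing $\eta$ in the $T$-closure of $K_n$.

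The principal obstacle is the technical claim $P_\D(\h'_\R)\subseteq\h'_\R$, required so that each summand $\bigotimes Q_i^{(S)}\xi_i$ genuinely lies in $K_n$. This rests on the $(U_t)$-invariance of $\D_\R$: then $\D$ is invariant under the analytic generator $A$, and the bounded self-adjoint operator $B=2(1+A^{-1})^{-1}$ relating the standard and $U$-inner products commutes with $P_\D$, so the real-structure condition defining $\h'_\R$ is preserved. Once this invariance is in hand, everything else is a direct multilinear computation.
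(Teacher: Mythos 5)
Your overall strategy---work degree by degree, prove that $K_n$ and $\D^{\otimes n}$ are mutually $T$-orthogonal, then combine this with totality of simple tensors with entries in $\h'_\R$ and the decomposition through $P_{\D}$ and $P_{\D^\perp}$---is the natural way to fill in what the paper dismisses as ``standard arguments'', and your identification of $P_\D(\h'_\R)\subseteq\h'_\R$ (via $A$-invariance of $\D$ and commutation of $2(1+A^{-1})^{-1}$ with $P_\D$) as the point where $(U_t)$-invariance enters matches the one observation the paper actually records, namely $\D=\overline{\D\cap(\h'_\R+i\h'_\R)}^{\norm{\cdot}_U}$. Once the orthogonality $K_n\perp_T\D^{\otimes n}$ and the totality are in hand, $\h^{\otimes_T^n}$ splits as the $T$-orthogonal sum of $\overline{\D^{\otimes n}}$ and $\overline{K_n}$, and your reverse inclusion follows; that half is fine.

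The forward inclusion, however, has a genuine gap. After expanding $\zeta_1\otimes\cdots\otimes\zeta_n\in\D^{\otimes n}$ along the fibers $\h=\bigoplus_a\h^{(a)}$, the summand indexed by $\sigma$ and a fiber multi-index $b$ is $c(\sigma,b)\prod_i\inner{\xi_i}{\zeta^{(b_{\sigma^{-1}(i)})}_{\sigma^{-1}(i)}}_U$, where $\zeta^{(b)}_k$ denotes the component of $\zeta_k$ in $\h^{(b)}$. The factor at $i=j$ involves $\zeta^{(b)}_{\sigma^{-1}(j)}$, which need \emph{not} lie in $\D$ unless $\D$ is compatible with the fiber decomposition; and since the scalar $c(\sigma,b)$ depends on $b$, you cannot resum over the fibers to recover $\inner{\xi_j}{\zeta_{\sigma^{-1}(j)}}_U$. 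This is not merely presentational: take $\h_\R=\R e_1\oplus\R e_2$ with trivial $U_t$ (so $\h'_\R=\h_\R$ and $\inner{\cdot}{\cdot}_U$ is the standard inner product), $q_{11}=q_{22}=q\neq q'=q_{12}$, and $\D_\R=\R(e_1+e_2)$, which is $(U_t)$-invariant. Then $e_1-e_2\in\D^\perp\cap\h'_\R$, yet
\begin{align*}
\inner{(e_1-e_2)\otimes e_1}{(e_1+e_2)\otimes(e_1+e_2)}_T=\inner{(e_1-e_2)\otimes e_1}{(1+T)\bigl((e_1+e_2)\otimes(e_1+e_2)\bigr)}_{\F(\h)}=q-q'\neq 0,
\end{align*}
so the right-hand side of the lemma is not contained in $\F_T(\D)^\perp$. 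The statement, and hence any proof of it, requires the additional hypothesis that $\D_\R$ is compatible with the fixed decomposition $\h_\R=\bigoplus_i\h^{(i)}_\R$; under that hypothesis the fiber components $\zeta^{(b)}_k$ do lie in $\D$, $P_\D$ commutes with the fiber projections so your summands $\bigotimes_iQ_i^{(S)}\xi_i$ really are in $K_n$, and the rest of your argument goes through. The paper makes this assumption implicitly (the invariant subspaces it later feeds into the lemma can be chosen fiberwise), but your write-up neither imposes it nor notices that the key vanishing step fails without it.
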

\begin{proof}
Since $\D_\R$ is $(U_t)$-invariant, so, we note that  
\begin{align*}
\D = \overline{ \D \cap (\h_\R' + i \h_\R')}^{ \norm{ \cdot}_U},~~ (\text{see Eq. } \ref{complement}). 
 \end{align*}
The rest follows from standard arguments.

\end{proof}

\begin{rem}\label{condi}
Let $ \D_\R \subseteq \h_\R $ be  a $(U_t)$-invariant subspace. Then  there exists a faithful normal conditional expectation   $E : M_T\rightarrow M_T(\D_{\R}, U_t)$, which preserve the state $\varphi$. Indeed, let  $ (\sigma_t)$ be  the modular automorphism group of $M_T$ with respect to the vacuum state $\varphi$. As $ \D_{\R}$ is $(U_t)$-invariant, so, form  \cite[Section 3.1]{BKM20}, it follows that $$ \sigma_t( M_T( \D_{\R}, U)) \subseteq M_T( \D_{\R}, U) \text{ for all } t \in \R.$$
Therefore, by Takesaki's theorem  \cite{TMCon}, such conditional expectation always exists.

\end{rem}

\begin{lem}\label{commu}
Let $ \D_\R \subseteq \h_\R $ be  a $(U_t)$-invariant subspace  and 
 $u=(u_n) \subseteq  M_T(\D_\R, U) \cap M_T ^\varphi $   a sequence  of unitaries  such that $ u_n \rightarrow 0$ as $n \rightarrow \infty$  in w.o.t.  
 Suppose
$x \in M_T$ with  $xu = ux $, then $ x \in M_T( \D_\R, U) $.  Furthermore, 
$$  M_T( \D_\R, U_t) \big)' \cap M_T \subseteq M_T( \D_\R, U).$$
\end{lem}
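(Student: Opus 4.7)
The \emph{furthermore} part follows from the first assertion: any $x \in M_T(\D_\R, U)' \cap M_T$ commutes in particular with each $u_n \in M_T(\D_\R, U)$, and the first assertion then places $x$ in $M_T(\D_\R, U)$. Existence of the sequence $(u_n)$ in the setting of interest is provided by Lemma \ref{unitary}.

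For the first assertion, the plan is to reduce via a conditional expectation and then exploit a unitary-averaging identity. By Remark \ref{condi}, there is a faithful normal $\varphi$-preserving conditional expectation $E \colon M_T \to M_T(\D_\R, U)$. Setting $y := x - E(x)$, the $M_T(\D_\R, U)$-bimodularity of $E$ applied to $x u_n = u_n x$ yields $E(x) u_n = u_n E(x)$, whence $y u_n = u_n y$ for every $n$. At the same time $E(y) = 0$; since $\varphi \circ E = \varphi$ this gives $\langle a \Omega, y \Omega\rangle = \varphi(a^* y) = \varphi(a^* E(y)) = 0$ for every $a \in M_T(\D_\R, U)$, i.e.\ $y\Omega \in \F_T(\D)^\perp$. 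Because $\Omega$ is separating for $M_T$, the task reduces to showing $y\Omega = 0$.

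To this end, unitarity of $u_n$ recasts $y u_n = u_n y$ as $u_n y u_n^* = y$, and the centralizer condition $u_n \in M_T^\varphi$ furnishes $\varphi(z u_n^*) = \varphi(u_n^* z)$ for every $z \in M_T$. Hence for every $a \in M_T$,
\[
\varphi(a y) \;=\; \varphi\bigl(a u_n y u_n^*\bigr) \;=\; \varphi\bigl(u_n^* a u_n \cdot y\bigr).
\]
The strategy is to pass to the limit $n \to \omega$ along a free ultrafilter: using $u_n \to 0$ in w.o.t.\ together with the $\mathrm{II}_1$-factor structure of $M_T(\D_\R, U)^\varphi$ established in \cite{BKM23, MK1} and exploited in Lemma \ref{unitary}, one argues (in a manner reminiscent of Dixmier's averaging in a factor) that the weak$^*$ cluster points of $u_n^* a u_n$ lie in $M_T(\D_\R, U)$. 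Pairing such a cluster point $b \in M_T(\D_\R, U)$ against $y$ gives $\varphi(b y) = \varphi\bigl(b E(y)\bigr) = 0$ by bimodularity of $E$. Consequently $\varphi(a y) = 0$ for every $a \in M_T$, and faithfulness of $\varphi$ yields $y = 0$, i.e.\ $x = E(x) \in M_T(\D_\R, U)$.

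The main obstacle is the averaging step: controlling the weak$^*$ cluster points of $u_n^* a u_n$ and showing they land in $M_T(\D_\R, U)$. I would attack this using the explicit description of $\F_T(\D)^\perp$ from Lemma \ref{inva} and the Wick product formula (Lemma \ref{Wick pdt formula}), combined with the observation that each $u_n \in M_T(\D_\R, U)$ preserves both $\F_T(\D)$ and $\F_T(\D)^\perp$ inside $\F_T(\h)$, so that the restriction of $u_n$ to $\F_T(\D)^\perp$ is a unitary converging weakly to $0$. These structural ingredients, together with the factoriality of the centralizer, are what make the averaging argument go through in the mixed $q$-deformed Araki--Woods setting.
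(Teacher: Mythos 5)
Your reduction is fine as far as it goes: introducing the $\varphi$-preserving expectation $E$ from Remark \ref{condi}, checking via bimodularity that $y=x-E(x)$ still commutes with each $u_n$, observing $y\Omega\in\F_T(\D)^\perp$, and aiming to kill $y\Omega$ is exactly the paper's skeleton. The problem is the pivotal ``averaging step,'' which you state but do not prove: that every weak$^*$ cluster point of $u_n^* a u_n$ lies in $M_T(\D_\R,U)$ for every $a\in M_T$. This claim is not a soft consequence of $u_n\to 0$ in w.o.t.\ together with factoriality of the centralizer; in its hardest instance it \emph{is} the lemma. Indeed, for $a\in M_T(\D_\R,U)'\cap M_T$ one has $u_n^* a u_n=a$ for all $n$, so the only cluster point is $a$ itself, and your claim for such $a$ literally asserts $a\in M_T(\D_\R,U)$ --- the ``furthermore'' statement you are trying to establish. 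The Dixmier-averaging analogy does not help either: there one averages over many unitaries to approach the center of a single algebra, whereas here a fixed sequence conjugates a fixed element and, in a generic inclusion $N\subseteq M$ (e.g.\ $M=N\bar\otimes P$ with $u_n\in N$), the conjugates $u_n^*au_n$ certainly need not approach $N$. So the argument is circular precisely where the structure of the mixed $q$-deformed algebra has to enter.

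The paper closes the gap by a concrete Fock-space computation rather than an averaging principle. It pairs $x_0\Omega$ against spanning vectors $\xi=\xi_1\otimes\cdots\otimes\xi_n\in\F_T(\D)^\perp$ with all $\xi_s\in\h_\R'$ and some $\xi_j\in\D^\perp$ (Lemma \ref{inva}), uses that $d(\xi)\in M_T'$ (Theorem \ref{commutant thm}) together with $xu=ux$ and unitarity of $u=(u_n)$ in $M_T^\omega$ to rewrite $\langle\xi,x_0\Omega\rangle=\lim_{n\to\omega}\langle d(\xi)u_n\Omega,\,x_0u_n\Omega\rangle$, and then, after expanding $x_0$ against Wick words via Lemma \ref{Wick pdt formula}, invokes the inner-product identity of Skalski--Wang \cite[Eq.~3.2]{SS}: the resulting terms $\langle r_{i_1}\cdots r^*_{i_n}u_n\Omega,\ l_{j_1}\cdots l^*_{j_m}u_n\Omega\rangle$ vanish in the limit because $u_n\Omega\in\F_T(\D)$, one right-leg vector is orthogonal to $\D$, and $u_n\to0$ weakly. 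Your proposal never brings in the commutant operators $d(\xi)$ or the fact that $\xi$ has a tensor leg in $\D^\perp$, and these are exactly the ingredients that make the limit computable. To repair your write-up you would need to replace the averaging claim by this (or an equivalent) explicit estimate.
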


\begin{proof}
Let  $E : M_T\rightarrow M_T(\D_{\R}, U_t)$ be the  $\varphi$-preserving faithful normal conditional expectation (see Remark \ref{condi}). 
Suppose $x_0 := x-E(x).$ Fix any $\xi\in F_T(\D)^{\perp},$ where $\D$ is defined in Lemma \ref{inva}, which is of the form $\xi=\xi_1\otimes\cdots\otimes\xi_n,$ for some $n\in\N,$
	where one of the  $\xi_k$ is contained in $\D^\perp \cap \h_\R'$.  Then we wish to prove  the following 
	$$ \langle \xi, x_0\Omega \rangle = 0.$$ 
First it follows from Lemma \ref{commutant thm} that $d(\xi) =d( \xi_1\otimes \cdots \otimes \xi_n) \in M_T'$. Hence, we have  
\begin{align*}
 \langle \xi, x_0\Omega \rangle &=  \langle d(\xi) \Omega, x_0\Omega \rangle 	\\
  &=  \langle u d(\xi) \Omega, ux_0\Omega \rangle 	\\
   &=  \langle d(\xi) u\Omega, x_0 u\Omega  \rangle, ~~~(\text{as } u d(\xi)= d(\xi) u) 	
  \end{align*}	
 where the inner product makes sense  in $L^2( M_T^\omega, \varphi^\omega)$, i.e, 
 $$ \langle d(\xi) u\Omega, x_0 u\Omega  \rangle = \lim_{ n \rightarrow \omega } \langle d(\xi) u_n\Omega, x_0 u_n\Omega  \rangle.$$
 Clearly, we  note that the map $ M_T \ni a \rightarrow \langle d(\xi)  u\Omega, ~a  u\Omega  \rangle$ 
is continuous with respect to weak operator topology. Hence, it is enough to show that $$\langle d(\xi)  u\Omega, ~ s(\zeta )  u\Omega  \rangle = 0,$$ for 
 $ \zeta =  \zeta_1 \otimes \zeta_2 \otimes \cdots \otimes \zeta_m $ where $ \zeta_k \in \h_{\R}$. 
 Now suppose $ r_{s} = r(\xi_{s})   $,   $r^*_{s} = r(\J \xi_{s})$ for $ 1 \leq s \leq n $, $l_t = l(\zeta_t) $ and  $l_t^* = l^*(\J\zeta_t)$ for $ 1 \leq t \leq m $.
Then by  using the Wick formula (see Lemma \ref{Wick pdt formula}), it is enough to prove the following  
 $$\langle r_{i_1} \cdots r_{i_l} r^*_{i_{l+1}} \cdots r^*_{i_{n}} u\Omega, 
   ~ ~~\quad l_{j_1} \cdots l_{j_t} l^*_{j_{t+1}} \cdots l^*_{j_{m}} u\Omega  \rangle = 0.$$ 
But then it follows from  \cite[ see Eq. 3.2]{SS}. Then,  by Lemma \ref{inva}, it follows that 
$$ \langle \xi, x_0\Omega \rangle = 0, \text{ for all } \xi \in \F_T( \D)^\perp.$$ 
Therefore, we have  $x_0\Omega \in \F_T( D)$. Consequently, $x \in  M_T( \D_\R)$.

\end{proof}

\begin{thm}\label{apbi}
	Let $ \h_\R =  \h_\R^{ap} + \h_\R^{wm} $ such that $\dim(\h_\R^{ap}) \geq 3 $. Then $M_T( \h_\R)$ has trivial bicentralizer.
\end{thm}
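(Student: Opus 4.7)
The plan is to combine the ultraproduct description $B_\varphi(M_T) = ((M_T^\omega)^{\varphi^\omega})' \cap M_T$ from \eqref{rela} with Lemmas \ref{unitary} and \ref{commu}, applied to two orthogonal $(U_t)$-invariant pieces of $\h_\R^{ap}$ whose existence is ensured by $\dim(\h_\R^{ap}) \geq 3$.

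First, since $A$ is diagonalisable on $\h_\R^{ap}$, that space decomposes into $(U_t)$-invariant atomic blocks of real dimension $1$ or $2$. The assumption $\dim(\h_\R^{ap}) \geq 3$ is used exactly to regroup these atoms into a nontrivial orthogonal splitting
\[
\h_\R^{ap} = \D_\R^{(1)} \oplus \D_\R^{(2)}
\]
with both summands nonzero and $(U_t)$-invariant. (A lone $2$-dimensional rotation block cannot be split further, so dimension $2$ alone would not suffice in general.)

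Next, for each $k \in \{1,2\}$ Lemma \ref{unitary} supplies a sequence of unitaries $(u_n^{(k)})_n \subseteq M_T(\D_\R^{(k)}, U) \cap M_T^\varphi$ tending weakly to $0$. Because each $u_n^{(k)}$ lies in the centralizer $M_T^\varphi$, a standard check shows that the ultraproduct class $U^{(k)} := (u_n^{(k)})^\omega$ belongs to $(M_T^\omega)^{\varphi^\omega}$. Given any $x \in B_\varphi(M_T)$, the description \eqref{rela} forces $x$ to commute with both $U^{(1)}$ and $U^{(2)}$ inside $M_T^\omega$; applying Lemma \ref{commu} to each $k$ yields
\[
x \in M_T(\D_\R^{(1)}, U) \cap M_T(\D_\R^{(2)}, U).
\]

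It remains to show this intersection is trivial. Since $\Omega$ is separating, it is enough to locate $x\Omega$: a direct Wick-product inspection of the generators $s(\xi)$, $\xi \in \D_\R^{(k)}$, gives $M_T(\D_\R^{(k)}, U)\Omega \subseteq \F_T(\D^{(k)})$, where $\D^{(k)}$ denotes the $\norm{\cdot}_U$-closure of the complexification of $\D_\R^{(k)}$. Using the orthogonal grading of the deformed Fock space \eqref{New Fock inner pdt} together with the fact that the $T$-norm and the $\F(\h)$-norm are comparable on each fixed grade $\h^{\otimes n}$, the problem reduces to $(\D^{(1)})^{\otimes n} \cap (\D^{(2)})^{\otimes n} = 0$ in $\h^{\otimes n}$ for every $n \geq 1$; this is immediate from $\D^{(1)} \perp \D^{(2)}$ in $(\h, \inner{\cdot}{\cdot}_U)$ (orthogonality at the deformed level follows because the splitting of $\h_\R$ is $(U_t)$-invariant, so $A$ commutes with the orthogonal projections). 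Hence $x\Omega \in \C\Omega$, and therefore $x \in \C 1$.

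The bulk of the technical work has already been packaged inside Lemmas \ref{unitary} and \ref{commu}, so I do not anticipate a genuine obstacle. The only step requiring care is the final Fock-space intersection, but that is routine once the orthogonal splitting has been chosen to respect the $(U_t)$-invariant decomposition of $\h_\R$.
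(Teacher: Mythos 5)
Your proposal is correct and follows essentially the same route as the paper: the ultraproduct description of the bicentralizer, Lemma \ref{unitary} to produce weakly null unitaries in the centralizer supported on an invariant subspace of $\h_\R^{ap}$, Lemma \ref{commu} to localize $B_\varphi(M_T)$ into $M_T(\D_\R,U)$, and then two orthogonal invariant subspaces to force triviality. You merely spell out two steps the paper leaves implicit --- why $\dim(\h_\R^{ap})\geq 3$ guarantees the orthogonal invariant splitting, and why $M_T(\D_\R^{(1)},U)\cap M_T(\D_\R^{(2)},U)=\C 1$ via the graded Fock-space argument --- both of which are accurate.
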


\begin{proof}
First we  note that  $ B_\varphi(M_T) = 	({(M_T^\omega)^{\varphi^\omega}})' \cap M_T$ (see \cite[Proposition.3.3]{HI17}). 
Suppose $ \D_{\R} \subseteq \h_\R^{ap} $ be an $(U_t)$-invariant subspace. Then we employ the Lemma \ref{unitary}, to find a sequence of unitary 
 $ u = (u_n) \subseteq M_T( \D_{\R}) \cap M_T^{ \varphi}$
such that $$ u_n \xrightarrow{ n \rightarrow \infty} 0 \text{ in w.o.t}.$$
Then observe that $ u \in {(M_T^\omega)^{\varphi^\omega}} $. Now suppose  $ x \in ({(M_T^\omega)^{\varphi^\omega}})' \cap M_T$, then we note that $  ux= xu $.
Therefore,  by Lemma \ref{commu}, it follows that $ x \in M_T( \D_{\R})$ and consequently, we obtain
 $$ B_\phi(M_T) \subseteq  M_T(\D_\R,U). $$  
As $\D_\R$ is  any $(U_t)$-invariant subspace of $\h_\R^{ap}$  and $\dim(\h_\R^{ap}) \geq 3 $, so, there exists   two  $(U_t)$-invariant subspaces $\D_\R^1 $ and $\D_\R^2 $  such that  $\D_\R^1 \perp \D_\R^2 $ (orthogonal). Then it follows that 
$$ B_\phi(M_T) \subseteq M_T(\D^1_\R, U) \cap M_T(\D^2_\R, U) = \C1.$$

\end{proof}

\section{ Bicentralizer of $M_T$ when $\h_{\R}^{wm} \neq 0$}

In this section, we show the $M_T$ has trivial bicentralizer when $\h_\R^{wm}  \neq \{0\}$ and without any restriction on the dimension  of  $\h_\R^{ap}$. Finally conclude  that  $M_T$ has trivial bicentralizer whenever it is type   $\mathrm{III}_1$  factor.  We begin with recalling the following definition  from   \cite[3.1]{HI20}. 
\begin{defn}
	Let $\omega\in\beta\N\setminus\N $, $\h$ be a real or a complex Hilbert space and $U :\R\rightarrow \mathcal{U}(\h )$ be  any strongly continuous orthogonal or unitary representation. Then we say  a bounded sequence $(\xi_n)_n\in\ell^{\infty}(\N, H)$ is $(U,\omega)$-equicontinuous if for any $\epsilon > 0$, there exists $\delta > 0$, such that
	\begin{align*}
	\set{n\in\N:\ \sup_{|t|\leqslant\delta}\norm{U_t\xi_n-\xi_n}_{H}<\epsilon} \in\omega.\\
	\end{align*}
\end{defn}

Write   $\mathfrak{E}(\h,U,\omega)$ for the subspace of  all $(U,\omega)$-equicontinuous bounded sequences of  $\ell^{\infty}(\N,\h)$. We note that  $\mathfrak{E}(\h,U,\omega)$  is a closed subspace of $\ell^{\infty}(\N, H)$.
Let  $\h_{\omega}$ be   the ultraproducts Hilbert space of $\h$ (see Section \ref{ultra}). Recall  that $$\h_{\omega}=\ell^{\infty}(\N,\h)/\I_\omega,$$  where 
$$\I_\omega=\set{(\xi_n)_n\in\ell^{\infty}(\N,\h)\ |\ \lim_{n\rightarrow\omega}\norm{\xi_n}_U=0}.$$
Observe that   $ \I_\omega \subseteq \mathfrak{E}(\h,U,\omega)$.

Now consider the unitary map  $(U_{\omega})_t$ on $\h_{\omega}$,  defined by $(U_{\omega})_t(\xi_n)_{\omega}= (U_t\xi_n)_{\omega}$ for $(\xi_n)_{\omega}\in\h_{\omega}$ and for all $t\in\R$. However, note that the map $\R\ni t\mapsto (U_{\omega})_t$ is not necessarily strongly continuous on $\h_{\omega}$. But as   $ \I_\omega \subseteq \mathfrak{E}(\h,U,\omega)$, so,    
$(U_{\omega})_t$ keeps the subspace  $\h_{U,\omega}:=\mathfrak{E}(\h,U,\omega)/\I_\omega \subseteq \h_{\omega}$ invariant. Then note that the map $\R\ni t\mapsto (U_{\omega})_t$ is strongly continuous on $\h_{U,\omega}$.
\begin{lem}
Let  $\h_1$,  $\h_2$ be  two Hilbert spaces and suppose  $\h=\h_1\oplus\h_2$. Then 	
$$ \h_{U,\omega}\simeq(\h_1)_{U,\omega}\oplus(\h_2)_{U,\omega} $$
\end{lem}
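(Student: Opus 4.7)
The plan is to exhibit the isomorphism via the obvious coordinate map. Write every vector $\xi \in \h$ uniquely as $\xi = \xi^{(1)} + \xi^{(2)}$ with $\xi^{(i)} \in \h_i$; since $\h_1$ and $\h_2$ are (necessarily) $U$-invariant orthogonal subspaces, $U_t = U_t^{(1)} \oplus U_t^{(2)}$ where $U_t^{(i)} := U_t|_{\h_i}$. I would define
\[
\Phi : \h_\omega \To (\h_1)_\omega \oplus (\h_2)_\omega, \qquad (\xi_n)_\omega \longmapsto \bigl( (\xi_n^{(1)})_\omega,\, (\xi_n^{(2)})_\omega \bigr),
\]
and check that $\Phi$ is a well-defined, surjective, isometric isomorphism of Hilbert spaces by invoking the Pythagorean identity $\|\xi_n\|^2 = \|\xi_n^{(1)}\|^2 + \|\xi_n^{(2)}\|^2$ pointwise in $n$ and then passing to the limit along $\omega$. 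This part is routine and the main work reduces to proving that $\Phi$ restricts to a bijection between $\h_{U,\omega}$ and $(\h_1)_{U,\omega} \oplus (\h_2)_{U,\omega}$.

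To prove this restriction property, I would establish the equivalence
\[
(\xi_n)_n \in \mathfrak{E}(\h, U, \omega) \iff (\xi_n^{(1)})_n \in \mathfrak{E}(\h_1, U^{(1)}, \omega) \text{ and } (\xi_n^{(2)})_n \in \mathfrak{E}(\h_2, U^{(2)}, \omega).
\]
Both implications follow from the orthogonality identity
\[
\|U_t \xi_n - \xi_n\|^2 = \|U_t^{(1)} \xi_n^{(1)} - \xi_n^{(1)}\|^2 + \|U_t^{(2)} \xi_n^{(2)} - \xi_n^{(2)}\|^2,
\]
which uses precisely the $U$-invariance of the decomposition. For $(\Rightarrow)$, each summand is bounded above by the left-hand side, so a uniform-in-$|t|\le\delta$ bound on the left transfers to each component. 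For $(\Leftarrow)$, given $\epsilon > 0$, choose $\delta > 0$ and a set $A_i \in \omega$ on which $\sup_{|t|\le\delta}\|U_t^{(i)}\xi_n^{(i)} - \xi_n^{(i)}\| < \epsilon/\sqrt{2}$; then $A_1 \cap A_2 \in \omega$ and on this set $\sup_{|t|\le\delta}\|U_t \xi_n - \xi_n\| < \epsilon$. Finally, the same orthogonality shows that $\mathcal{I}_\omega$ splits correctly, so $\Phi$ descends to an isomorphism on the quotients.

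I do not expect any serious obstacle: the argument is a clean application of Pythagoras combined with the filter property that intersections of two sets in $\omega$ remain in $\omega$. The only point requiring mild care is to confirm the implicit hypothesis that the direct sum $\h = \h_1 \oplus \h_2$ is $U$-invariant (which is the only way the statement of the lemma makes sense, since otherwise $U^{(i)}$ would not be defined on $\h_i$).
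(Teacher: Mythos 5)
Your proof is correct and follows essentially the same route as the paper: decompose coordinatewise, use the Pythagorean identity, and check that the relevant subspaces of $\ell^{\infty}(\N,\h)$ split as direct sums. In fact your write-up is more complete than the paper's, which only verifies $\I_\omega=\I^1_{\omega}\oplus\I^2_{\omega}$ and leaves unstated the equally necessary splitting of the equicontinuous subspace $\mathfrak{E}(\h,U,\omega)$ --- the part you handle via the intersection-of-filter-sets argument --- and your remark that the decomposition $\h=\h_1\oplus\h_2$ must be $(U_t)$-invariant for the statement to make sense is also correct (and implicit in the paper, where the summands are the almost periodic and weakly mixing parts).
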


\begin{proof}

Clearly $\ell^{\infty}(\N,\h)=\ell^{\infty}(\N,\h_1)\oplus\ell^{\infty}(\N,\h_2)$. Since 
\begin{align*}
\I_\omega=\set{(\xi_n)_n\in\ell^{\infty}(\N,\h)\ |\ \lim_{n\rightarrow\omega}\norm{\xi_n}_U=0},
\end{align*}
and each $\xi_n=\xi_n^1+\xi_n^2$ for $\xi_n^1\in\h_1$, $\xi_n^2\in\h_2$, we have $\lim_{n\rightarrow\omega}\norm{\xi_n}_U=0 \implies\lim_{n\rightarrow\omega}\norm{\xi_n^1}_U+\lim_{n\rightarrow\omega}\norm{\xi_n^2}_U=0$. Hence, $\lim_{n\rightarrow\omega}\norm{\xi_n^1}_U=0$ and $\lim_{n\rightarrow\omega}\norm{\xi_n^2}_U=0$. We denote
\begin{align*}
\I^1_{\omega}=\set{(\xi_n^1)_n\in\ell^{\infty}(\N,\h_1)\ |\ \lim_{n\rightarrow\omega}\norm{\xi_n^1}_U=0}\\ \text{and}\  \I^2_{\omega}=\set{(\xi_n^2)_n\in\ell^{\infty}(\N,\h_2)\ |\ \lim_{n\rightarrow\omega}\norm{\xi_n^2}_U=0}.
\end{align*}
This implies $\I_\omega=\I^1_{\omega}\oplus\I^2_{\omega}$. Therefore, 
we have  $\h_{U,\omega}\simeq(\h_1)_{U,\omega}\oplus(\h_2)_{U,\omega} $.

\end{proof}

Now we  note that $\h_{U,\omega}$ be a large space, indeed for any sequence $ (n_k)$ and $ \xi \in \h_\R$, define $ \xi_k = U_{n_k} \xi $, then it easy to check that $ (\xi_n)_\omega \in \h_{U,\omega}$.  Now  we will write the  following two results  from \cite[Lemma 3.5(1) and Proposition 3.3]{HI20} and for the completeness we include their proofs. It  will be used in the sequel. 

\begin{thm}\label{subspace}
	If $(\h_{\R},U_t)$ is weakly mixing, then there is a separable infinite dimensional   closed subspace $L_{\R} \subset (\h_{\R})_{U,\omega}\ominus \h_{\R}$ such that the restriction  $(L_{\R},(U_{\omega})_t|_{ L_{\R}})$ is almost periodic.
\end{thm}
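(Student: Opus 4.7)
The plan is to build $L_\R$ as the closed linear span of countably many mutually orthogonal two-dimensional real subspaces of $(\h_\R)_{U,\omega}$, each supporting a rotation-type action of $(U_\omega)_t$. I would first extend $(U_t)$ to a strongly continuous unitary group on $\h$ with self-adjoint generator $H$, so that $U_t=e^{itH}$. Reality of $(U_t)$ forces $\J H\J=-H$, hence $\sigma(H)$ is symmetric about $0$; weak mixing then amounts to $H$ having no eigenvalues, i.e.\ every nonzero spectral measure $\mu_\xi$ is atomless. Since a nonzero atomless measure cannot have countable support, $\sigma(H)$ is uncountable.

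Pick an infinite sequence $0<\lambda_1<\lambda_2<\cdots$ of distinct points in $\sigma(H)$ and $\eps_n\downarrow 0$ making the intervals $I_n^{(k)}:=[\lambda_k-\eps_n,\lambda_k+\eps_n]$ pairwise disjoint (in $k$) for each fixed $n$. Using density of $\h_\C$ in $\h$, for each $k$ and each sufficiently large $n$ pick a $\norm{\cdot}_U$-unit vector $v_n^{(k)}\in\h_\C$ with $\norm{E(I_n^{(k)})^\perp v_n^{(k)}}_U<1/n$, where $E(\cdot)$ is the spectral resolution of $H$. Functional calculus then yields
\begin{align*}
\norm{U_tv_n^{(k)}-e^{it\lambda_k}v_n^{(k)}}_U\leqslant|t|\,\eps_n+2/n,
\end{align*}
so $(v_n^{(k)})_n\in\mathfrak{E}(\h,U,\omega)$. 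Writing $v_n^{(k)}=x_n^{(k)}+iy_n^{(k)}$ with $x_n^{(k)},y_n^{(k)}\in\h_\R$, and using the parallel bound for $\J v_n^{(k)}\in E(-I_n^{(k)})\h$ (approximate eigenvalue $-\lambda_k$), I obtain $(x_n^{(k)})_n,(y_n^{(k)})_n\in\mathfrak{E}(\h_\R,U,\omega)$. The ultraproduct classes $\tilde x_k,\tilde y_k\in(\h_\R)_{U,\omega}$ then satisfy that $(U_\omega)_t$ acts on $\R\tilde x_k+\R\tilde y_k$ as rotation by angle $t\lambda_k$.

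It remains to verify the orthogonality conditions. For any $\xi\in\h_\R$, atomlessness of $\mu_\xi$ gives $\norm{E(I_n^{(k)})\xi}_U\to 0$, whence $|\inner{v_n^{(k)}}{\xi}_U|\to 0$ and consequently $\tilde x_k,\tilde y_k\perp\xi$; disjointness of the intervals $I_n^{(k)}$ across $k$ yields mutual orthogonality of the two-dimensional blocks. Setting
\begin{align*}
L_\R:=\overline{\mathrm{span}_\R\{\tilde x_k,\tilde y_k : k\geqslant 1\}}\subset(\h_\R)_{U,\omega}\ominus\h_\R
\end{align*}
produces a separable, infinite-dimensional, closed subspace on which $(U_\omega)_t$ has pure point spectrum $\{\pm\lambda_k\}_{k\geqslant 1}$, hence is almost periodic. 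The main technical subtlety is the real-imaginary decomposition inside the $U$-deformed completion $\h$, where $\J$ need not be $\norm{\cdot}_U$-bounded; this is circumvented by performing the spectral localization inside the dense subspace $\h_\C=\h_\R+i\h_\R$ before decomposing, so that $x_n^{(k)},y_n^{(k)}$ genuinely live in $\h_\R$ and the equicontinuity estimates transport to real sequences.
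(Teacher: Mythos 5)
Your strategy coincides with the paper's: locate points of the purely continuous spectrum of the generator, promote them to honest eigenvalues of $(U_\omega)_t$ on the equicontinuous ultraproduct via spectrally localized approximate eigenvectors, pair $\lambda_k$ with $-\lambda_k$ (equivalently $\lambda$ with $\lambda^{-1}$ for $A$), take real parts to obtain two\mbox{-}dimensional rotation blocks, and use atomlessness of the spectral measures together with disjointness of the spectral windows to get orthogonality to $\h_{\R}$ and mutual orthogonality of the blocks. The only structural difference is that the paper outsources the eigenvalue step to \cite[Lemma 3.5(1)]{HI20}, whereas you prove it directly; your estimates on the complex vectors (the bound $\norm{U_tv_n^{(k)}-e^{it\lambda_k}v_n^{(k)}}_U\leqslant |t|\eps_n+2/n$, the weak-null property, the cross-block orthogonality) are correct.

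The gap is precisely at the point you flag, and your workaround does not close it. Writing $x_n^{(k)}=\tfrac12(v_n^{(k)}+\J v_n^{(k)})$, you need $U_tx_n^{(k)}-x_n^{(k)}=\tfrac12(U_t-1)v_n^{(k)}+\tfrac12(U_t-1)\J v_n^{(k)}$ to be small in $\norm{\cdot}_U$, uniformly along $\omega$. A direct computation gives $\norm{\J\zeta}_U=\norm{A^{-1/2}\zeta}_U$ for $\zeta\in\h_{\C}$, so when $A$ is unbounded (which happens for general weakly mixing $(U_t)$) the conjugation $\J$ is unbounded on $(\h,\norm{\cdot}_U)$. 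Your condition $\norm{E(I_n^{(k)})^{\perp}v_n^{(k)}}_U<1/n$ therefore gives no control on $\norm{\J E(I_n^{(k)})^{\perp}v_n^{(k)}}_U$; in particular the assertion that $\J v_n^{(k)}\in E(-I_n^{(k)})\h$ is false (only $\J E(I_n^{(k)})v_n^{(k)}$ lands there), and neither $\sup_n\norm{x_n^{(k)}}_U<\infty$ nor the $(U,\omega)$-equicontinuity of $(x_n^{(k)})_n$ follows from what you wrote: the tail term can be arranged to blow up while still satisfying your $1/n$ bound. Choosing $v_n^{(k)}$ in $\h_{\C}$ only guarantees that its real and imaginary parts lie in $\h_{\R}$; it does not tame $\J$ on the spectral tail. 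The repair is to localize exactly rather than approximately: on $E(I_n^{(k)})\h$ with $I_n^{(k)}\subset[\lambda_k-\eps_n,\lambda_k+\eps_n]$ and $\lambda_k>\eps_n>0$, the operator $A^{-1/2}$, hence $\J$, is bounded with norm at most $e^{-(\lambda_k-\eps_n)/2}$, so taking $v_n^{(k)}$ to be a normalized exact truncation $E(I_n^{(k)})\zeta_n$ (which still lies in $\dom(\J)$, with $\J$-real and $\J$-imaginary parts in the $\norm{\cdot}_U$-closure of $\h_{\R}$, i.e.\ in $\h_{\R}$ itself since $\norm{\cdot}_U$ restricts to the original norm on $\h_{\R}$) — or, equivalently, strengthening your condition to $\norm{(1+A^{-1/2})E(I_n^{(k)})^{\perp}v_n^{(k)}}_U<1/n$ — makes all of your subsequent estimates valid.
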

\begin{proof}
	Since $U$ is weakly mixing, there is an infinite subset $\set{\lambda_n}_{n\in\N}\subset \sigma (A) \cap (1,\infty)$ such that $\sup_n \lambda_n<\infty.$ It follows from \cite[Lemma 3.5(1)]{HI20} that each $\lambda_n$ is in the point spectrum of $A_{\omega}$. Therefore,  for each $\lambda_n$, there exists  a unit vector $\xi^{(n)} = (\xi^{(n)}_m)_{\omega}\in\h_{U,\omega}$  such that $(U_{\omega})_t\xi^{(n)} =\lambda^{it}\xi^{(n)} $ for every $t\in\R$. It follows that $\xi_m^{(n)} \rightarrow 0$ weakly as $m\rightarrow\omega$ and so $\xi^{(n)}\in\h_{U,\omega}\ominus\h$, for each $n\in\N$.  Now, for each $\lambda_n$,  fix a unit eigenvector $\xi^{(n)}\in\h_{U,\omega}\ominus\h$. Consider the two dimensional subspace $L^n$ of $\h_{U,\omega}\ominus\h$ generated by the eigenvector corresponding to $\lambda_n$ and $\lambda_n^{-1}$. Denote the real part of the subspace $L^n$ by $L^n_{\R}$. Clearly $L^n_{\R}\subset (\h_{\R})_{U,\omega}\ominus \h_{\R} $. Now, consider the following  closed subspace $$L_{\R}=\oplus_{n\in\N}L^n_{\R}\subset (\h_{\R})_{U,\omega}\ominus \h_{\R}.$$ It follows that $U_{\omega}|_{L_{\R}}$ is almost periodic and $ \dim( L_\R) = \infty$.

\end{proof}

\begin{defn}
	Let   $(x_n)_n\in\ell^{\infty}(\N, M)$, it is said to be $(\sigma^{\varphi}, \omega)$-equicontinuous  if for any $\epsilon > 0$, there exists $\delta > 0$  such that $\set{n\in\N:\ \sup_{\abs{t}\leqslant\delta}\norm{\sigma_t^{\varphi}(x_n)-x_n}^{\sharp}_{\varphi}<\epsilon}\in\omega$, where $\norm{x}^{\sharp}_{\varphi}= \varphi(xx^*+x^*x)^{1/2}$. The norm $\norm{\cdot}_{\varphi}^{\sharp}$ induces strong* topology on uniformly bounded sets.
\end{defn}

\begin{prop}\label{equi}
	Let $(\xi_n)_n \in\ell^{\infty}(\N, \h_\R)$. The following conditions are equivalent.\\
	(1) $(\xi_n)_n$ is $(U,\omega)$-equicontinuous.\\
	(2) $(s(\xi_n))_n$ is $(\sigma^{\varphi},\omega)$-equicontinuous.\\
	(3) $(s(\xi_n))_n \in M_T^\omega.$\\
	For any $(U, \omega)$-equicontinuous vector $(\xi_n)_n\in\ell^{\infty}(\N, \h_R)$, $\left(\frac{\sqrt{2}}{\sqrt{1+A^{-1}}}\xi_n\right)_n$ is also $(U,\omega)$-equicontinuous.
\end{prop}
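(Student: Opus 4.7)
My plan is to first establish $(1) \iff (2)$ by a direct isometric comparison between the state-norm $\|\cdot\|_\varphi^\sharp$ applied to generators and the Hilbert-space norm on $\h_\R$, then invoke the Ando--Haagerup characterization of the Ocneanu ultraproduct to obtain $(2) \iff (3)$, and finally treat the stability claim by a commutation argument.

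For $(1) \iff (2)$, I will use that the modular group $(\sigma_t^\varphi)$ of $M_T$ with respect to the vacuum state is the second quantization of $(U_t)$, so in particular $\sigma_t^\varphi(s(\xi)) = s(U_t\xi)$ for $\xi \in \h_\R$. Real-linearity of $s$ then gives
\begin{align*}
\sigma_t^\varphi(s(\xi_n)) - s(\xi_n) = s(U_t\xi_n - \xi_n).
\end{align*}
Using $s(\eta)^* = s(\eta)$ and $s(\eta)\Omega = \eta$ for $\eta \in \h_\R$, I will compute
\begin{align*}
\|s(\eta)\|_\varphi^\sharp = \sqrt{2\varphi(s(\eta)^2)} = \sqrt{2}\,\|\eta\|_U.
\end{align*}
The key point will be the isometry $\|\eta\|_U = \|\eta\|_{\h_\R}$ on $\h_\R$: writing $\|\eta\|_{\h_\R}^2 - \|\eta\|_U^2 = \langle S\eta, \eta\rangle_{\h_\C}$ with $S := (I-A)(I+A)^{-1}$, the relation $\J A \J = A^{-1}$ yields $\J S \J = -S$; combining this with $\J\eta = \eta$ and the anti-linearity of $\J$ forces $\langle S\eta,\eta\rangle = 0$. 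Hence
\begin{align*}
\|\sigma_t^\varphi(s(\xi_n)) - s(\xi_n)\|_\varphi^\sharp = \sqrt{2}\,\|U_t\xi_n - \xi_n\|_{\h_\R},
\end{align*}
from which $(1) \iff (2)$ is immediate.

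For $(2) \iff (3)$, I will appeal to the Ando--Haagerup characterization of the Ocneanu ultraproduct \cite{AH14}: a uniformly norm-bounded sequence $(x_n) \in \ell^\infty(\N, M)$ lies in $\M^\omega(M,\varphi)$, and hence defines an element of $M^\omega$, if and only if it is $(\sigma^\varphi, \omega)$-equicontinuous. The sequence $(s(\xi_n))$ is automatically uniformly bounded because $\|s(\xi)\| \leq 2(1-q)^{-1/2}\|\xi\|_U$, so the hypothesis of the characterization is satisfied.

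For the last statement, I will observe that $\sqrt{2}/\sqrt{1+A^{-1}} = \sqrt{Q}$ where $Q := 2A/(1+A)$ is a positive bounded operator on $\h_\C$ of norm at most $\sqrt{2}$ commuting with $U_t = A^{it}$. Therefore
\begin{align*}
U_t\!\left(\tfrac{\sqrt{2}}{\sqrt{1+A^{-1}}}\xi_n\right) - \tfrac{\sqrt{2}}{\sqrt{1+A^{-1}}}\xi_n = \tfrac{\sqrt{2}}{\sqrt{1+A^{-1}}}\bigl(U_t\xi_n - \xi_n\bigr),
\end{align*}
and boundedness of the operator transfers equicontinuity. The subtle step I expect is the identity $\|\eta\|_U = \|\eta\|_{\h_\R}$ on $\h_\R$; everything else is either a routine computation or a direct invocation of Ando--Haagerup.
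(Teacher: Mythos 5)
Your proof is correct and follows essentially the same route as the paper's: the isometric identity $\norm{\sigma_t^\varphi(s(\xi_n))-s(\xi_n)}_\varphi^\sharp=\sqrt{2}\,\norm{U_t\xi_n-\xi_n}_U$ for $(1)\iff(2)$, the equicontinuity characterization of the Ocneanu multiplier algebra for $(2)\iff(3)$ (the paper cites Masuda--Tomatsu rather than Ando--Haagerup, but it is the same criterion), and boundedness of $\sqrt{2}(1+A^{-1})^{-1/2}$ together with its commutation with $U_t$ for the final claim. Your additional verification that $\norm{\cdot}_U$ agrees with $\norm{\cdot}_{\h_\R}$ on $\h_\R$ is correct but not actually needed for the argument.
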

\begin{proof}
	For $t\in\R$ and $n\in\N$, we have
	\begin{align*}
	\norm{U_t\xi-\xi}_U=\norm{s(U_t\xi)-s(\xi)}_{\varphi}=\norm{\sigma_{-t}^{\varphi}(s(\xi))-s(\xi)}_{\varphi}
	\end{align*}
	It is known that on uniformly bounded sets, the norm $\norm{\cdot}_{\varphi}$ induces the strong operator topology. Since $s(\xi)$ is self-adjoint for all $\xi\in\h_\R$, we have the equivalence $(1)\iff(2)$. The equivalence $(2)\iff (3)$ follows from \cite[Theorem 1.5]{MT17}.
	
	Let $(\xi_n)_n\in\ell^{\infty}(\N,\h)$ be a $(U,\omega)$-equicontinuous. Notice that 
	\begin{align*}
	\norm{U_t\frac{\sqrt{2}}{\sqrt{1+A^{-1}}}\xi_n-\frac{\sqrt{2}}{\sqrt{1+A^{-1}}}\xi_n}_U&=\norm{\frac{\sqrt{2}}{\sqrt{1+A^{-1}}}U_t\xi_n-\frac{\sqrt{2}}{\sqrt{1+A^{-1}}}\xi_n}_U\\ 
	&\leq \norm{\frac{\sqrt{2}}{\sqrt{1+A^{-1}}}}\norm{U_t\xi_n-\xi_n}_U.
	\end{align*} 
	This implies $\left(\frac{\sqrt{2}}{\sqrt{1+A^{-1}}}\xi_n\right)_n$ is $(U,\omega)$-equicontinuous.
	
\end{proof}

Consider the mixed $q$-Araki-Woods von Neumann algebra $\Gamma_T((\h_{\R})_{U,\omega}, U_{\omega})^{\prime\prime}$  for the same $T$ that defines the mixed $q$-Araki-Woods algebra $M_T$. Observe that $\Gamma_T((\h_{\R})_{U,\omega}, U_{\omega})^{\prime\prime}$ has a faithful unital representation on $\F_T((\h_{\R})_{U,\omega})$ with $\Omega $ as a cyclic and separating vector. We also note that $\Gamma_T((\h_{\R})_{U,\omega}, U_{\omega})^{\prime\prime}$ is a factor (see \cite{BMRW}). The vacuum state on $\Gamma_T((\h_{\R})_{U,\omega}, U_{\omega})^{\prime\prime}$ will be given by $\phi_{\omega}(\cdot)=\inner{ \Omega}{(\cdot)\Omega}_{U,\omega}$.

Denote the $*$-subalgebra generated by Wick products in $\Gamma_T((\h_{\R})_{U,\omega}, U_{\omega})^{\prime\prime}$  by $\widetilde{\Gamma}_T((\h_{\R})_{U,\omega}, U_{\omega})$.

\begin{thm}\label{embedding}
	The map $\iota:\widetilde{\Gamma}_T((\h_{\R})_{U,\omega},U_{\omega})\rightarrow M_T^{\omega}$ defined by $\iota(s((\xi_n)_{\omega}))=(s(\xi_n))_{\omega}$,  extends to a state preserving unital $*$-embedding of $\Gamma_T((\h_{\R})_{U,\omega},U_{\omega})$ into $M_T^\omega$.
\end{thm}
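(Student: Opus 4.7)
The strategy is to verify state preservation of $\iota$ at the Wick-polynomial $*$-algebra level, which handles both well-definedness and injectivity, and then to promote $\iota$ to a normal $*$-embedding of the von Neumann algebra via the standard GNS/modular argument using the cyclic and separating vacuum vector $\Omega$.

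I would first extend $\iota$ algebraically. For generators $\xi^{(k)} = (\xi^{(k)}_n)_\omega \in (\h_\R)_{U,\omega}$, set $\iota(s(\xi^{(k)})) := (s(\xi^{(k)}_n))^\omega$, which lies in $M_T^\omega$ by Proposition \ref{equi}; extend to arbitrary $*$-polynomials in the $s(\xi^{(k)})$'s using that $\M^\omega(M_T, \varphi)$ is a unital $*$-subalgebra of $\ell^\infty(\N, M_T)$. The crux is moment preservation. Expanding via the Wick formula (Lemma \ref{Wick pdt formula}) and contracting iteratively against $\Omega$, any vacuum moment $\varphi\bigl(s(\eta^{(1)}) \cdots s(\eta^{(m)})\bigr)$ on $M_T$ equals a \emph{universal} polynomial $F_{T,m}$ in the deformed inner products $\{\langle \eta^{(i)}, \eta^{(j)}\rangle_U\}_{i,j}$ whose coefficients depend only on the $q_{ij}$'s. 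Since the ultra-Hilbert inner product satisfies $\langle (\eta^{(i)}_n)_\omega, (\eta^{(j)}_n)_\omega\rangle_{U_\omega} = \lim_{n\to\omega}\langle \eta^{(i)}_n, \eta^{(j)}_n\rangle_U$, we conclude
\[
\phi_\omega\bigl(s(\xi^{(1)}) \cdots s(\xi^{(m)})\bigr) = \lim_{n\to\omega} \varphi\bigl(s(\xi^{(1)}_n) \cdots s(\xi^{(m)}_n)\bigr) = \varphi^\omega\bigl(\iota(s(\xi^{(1)}) \cdots s(\xi^{(m)}))\bigr).
\]
Faithfulness of $\varphi^\omega$ on $M_T^\omega$ forces $\iota$ to factor through the relations defining $\widetilde{\Gamma}_T((\h_\R)_{U,\omega}, U_\omega)$ (so $\iota$ is well-defined on $\widetilde{\Gamma}_T$), and faithfulness of $\phi_\omega$ (since $\Omega$ is separating for $\Gamma_T((\h_\R)_{U,\omega}, U_\omega)$) forces $\iota$ to be injective.

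To promote $\iota$ to the von Neumann algebra, I would use that both algebras are in standard form with cyclic separating vacuum vectors: define the isometry $V : L^2(\Gamma_T((\h_\R)_{U,\omega}, U_\omega), \phi_\omega) \to L^2(M_T^\omega, \varphi^\omega)$ on the dense subspace $\widetilde{\Gamma}_T \Omega$ by $V(x\Omega) := \iota(x)\Omega^\omega$; state preservation makes this an isometry, and it intertwines the modular conjugations and modular operators by Tomita-Takesaki theory. Then $x \mapsto V x V^*$ extends $\iota$ to a normal unital $*$-embedding $\Gamma_T((\h_\R)_{U,\omega}, U_\omega) \hookrightarrow M_T^\omega$. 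The main obstacle is the universal moment identity: one must carefully verify that the Wick expansion produces an expression functorial in the data $(\h, \langle \cdot, \cdot \rangle_U, T)$, so that taking $\omega$-limits of inner products commutes with its evaluation. Once this functoriality is granted, the remaining extension step is routine modular theory.
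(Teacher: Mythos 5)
Your proposal is correct in substance and is in fact more self-contained than the paper's own argument. The paper disposes of the theorem by citing an external criterion (\cite[Remark 2.2]{Bikram-Izumi-Sunder}, together with factoriality of $\Gamma_T((\h_{\R})_{U,\omega},U_\omega)''$) that reduces everything to two checks: that each $(s(\xi_n))_\omega$ lands in $M_T^\omega$ (Proposition \ref{equi}) and that the state is preserved on single generators $s(\xi)$ --- a computation that is nearly vacuous since $\varphi(s(\xi))=0$. The real content, namely that \emph{joint} moments of the generators match, is absorbed into the citation. You instead carry out exactly that verification: the Wick/pair-partition expansion exhibits $\varphi\bigl(s(\eta^{(1)})\cdots s(\eta^{(m)})\bigr)$ as a fixed polynomial in the inner products $\inner{\eta^{(i)}}{\eta^{(j)}}_U$, these pass to $\omega$-limits, faithfulness of $\varphi^\omega$ and of $\phi_\omega$ gives well-definedness and injectivity on the Wick $*$-algebra, and the standard-form/Kaplansky argument promotes the map to a normal embedding. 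Both routes rest on the same underlying fact (moment matching on a $\sigma$-weakly dense $*$-algebra plus faithful normal states in standard form); yours makes the key computation explicit, the paper's is shorter but leans on the black box.

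Two points deserve care in your "universal moment identity." First, in the mixed-$q$ setting the coefficients of the moment polynomial are \emph{not} universal in the inner products alone: they involve the $q_{ij}$'s attached to the components $\h^{(i)}$ containing the vectors, so you must first invoke the decomposition $(\h_{\R})_{U,\omega}\simeq\bigoplus_i(\h_{\R}^{(i)})_{U,\omega}$ (the paper's Lemma on ultraproducts of direct sums) to see that the operator $T$ on the ultraproduct acts by the same scalars on the corresponding components, and then restrict to generators with fixed component indices (which suffice by density and multilinearity). Second, your final step $x\mapsto VxV^*$ should be phrased as the induction onto $\overline{\iota(\widetilde{\Gamma}_T)\Omega^\omega}$ followed by the observation that a bounded net $\iota(a_i)$ with $a_i\to x$ strongly$^*$ has a unique $\sigma$-weak cluster point in $M_T^\omega$ because $\Omega^\omega$ is separating; as written, $VxV^*$ is an operator on $L^2(M_T^\omega)$ vanishing off the range of $V$ and is not literally an element of $M_T^\omega$. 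Neither issue is a gap in the strategy; both are routine to repair.
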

\begin{proof}
	First we note that the vacuum state 	$\phi_{\omega} $  is a faithful normal state  on the $*$-algebra $\widetilde{\Gamma}_T((\h_{\R})_{U,\omega}, U_{\omega})$. Further, the ultraproduct state $ \phi^\omega$ on $M_T^\omega$ is also a faithful normal state. As $\Gamma_T((\h_{\R})_{U,\omega}, U_{\omega})^{\prime\prime}$ is a factor, so, in view of \cite[Remark 2.2]{Bikram-Izumi-Sunder}, to establish the theorem 
	it is enough to check the following. 
	\begin{enumerate}
		\item[(i)]  $ (s(\xi_n))_{\omega} \in M_T^\omega$  for all  $ (\xi_n)_{\omega} \in (\h_{\R})_{U,\omega}$. 
		\item[(ii)]  The map $\iota$ is state preserving on generating set, i.e. $$\varphi^{\omega} ((s(\xi_n))_{\omega})  =\phi_{\omega}(s((\xi_n)_{\omega})), \text{ for all } (\xi_n)_{\omega} \in (\h_{\R})_{U,\omega}.$$
	\end{enumerate}
	Indeed, we note that $(i)$ follows from Proposition \ref{equi}. \\	
	For $(ii)$, we have 
	\begin{align*}
	\varphi^{\omega} (\iota(s((\xi_n)_{\omega})))=\varphi^{\omega} ((s(\xi_n))_{\omega})
	&=\lim_{n\rightarrow\omega}\varphi(s(\xi_n))\\
	&=\lim_{n\rightarrow\omega}\inner{\Omega}{s(\xi_n)\Omega}_{U}\\
	&=\lim_{n\rightarrow\omega}\inner{\Omega}{\xi_n}_{U}\\
	&=\inner{\Omega}{(\xi_n)_{\omega}}_{U,\omega}\\
	&=\inner{\Omega}{s((\xi_n)_{\omega})\Omega}_{U,\omega}=\phi_{\omega}(s((\xi_n)_{\omega})).
	\end{align*}
	Thus, it completes the proof.

\end{proof}

\begin{rem}\label{weakmixing} We note that if $\h_\R^{wm} \neq 0  $, then  similar argument as   \cite[Theorem 8.1]{BM17} can be used to show that   $M_T$ is always type $\mathrm{III}_1$ factor.

\end{rem}

\begin{lem}\label{dim3}
	Assume  $M_T$ is  type  $\mathrm{III}_1$ factor and $\h_\R^{wm} =0  $, then  $ \dim( \h_\R) > 3 .$
\end{lem}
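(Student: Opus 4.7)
The plan is to apply the type classification recalled in the introduction: $M_T$ is of type $\mathrm{III}_1$ if and only if the subgroup of $\R_{*}^{\times}$ generated by the eigenvalues of the analytic generator $A$ equals $\R_{*}^{\times}$ (see \cite{MK1}, \cite{BKM23}). I will argue by contradiction, showing that $\dim(\h_\R) \leq 3$ together with $\h_\R^{wm} = 0$ forces the eigenvalue subgroup to be either trivial or cyclic, hence a proper subgroup of $\R_{*}^{\times}$.

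First, since $\h_\R^{wm} = 0$, the representation $(U_t)$ is almost periodic, so $A$ is diagonalizable on $\h_\C$. Because $(U_t)$ comes from an orthogonal representation on the real Hilbert space $\h_\R$, the complex conjugation $\J$ on $\h_\C$ intertwines $A$ with $A^{-1}$, i.e.\ $\J A \J = A^{-1}$. Consequently, the point spectrum of $A$ is symmetric under $\lambda \mapsto \lambda^{-1}$ with matching multiplicities; in particular, for each eigenvalue $\lambda \neq 1$ the pair $\{\lambda,\lambda^{-1}\}$ contributes a real $(U_t)$-invariant subspace of $\h_\R$ of real dimension at least $2$ (spanned by the real and imaginary parts of an eigenvector together with its $\J$-conjugate).

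Now suppose for contradiction that $\dim(\h_\R) \leq 3$. The dimension bound combined with the above pairing leaves only two possibilities: either (a) $A = I$, so the point spectrum of $A$ is $\{1\}$ and the generated subgroup is trivial; or (b) the point spectrum is exactly $\{1, \lambda, \lambda^{-1}\}$ for a single $\lambda > 0$ with $\lambda \neq 1$, in which case the generated subgroup of $\R_{*}^{\times}$ is the cyclic, discrete subgroup $\{\lambda^n : n \in \Z\} \subsetneq \R_{*}^{\times}$. In either case the generated group is a proper subgroup of $\R_{*}^{\times}$, contradicting the hypothesis that $M_T$ is of type $\mathrm{III}_1$. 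Therefore $\dim(\h_\R) > 3$.

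I do not anticipate a serious obstacle: once the reciprocal pairing of eigenvalues is in hand, the argument reduces to a routine dimension count. The only point that requires care is that the real orthogonal structure genuinely forces the $\lambda \leftrightarrow \lambda^{-1}$ pairing with matching multiplicities, but this is standard and follows directly from $\J A \J = A^{-1}$.
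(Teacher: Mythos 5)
Your proposal is correct and follows essentially the same route as the paper: both arguments reduce to the observation that an almost periodic $(U_t)$ on a space of dimension $2$ or $3$ has analytic generator with point spectrum $\{\lambda,\lambda^{-1}\}$ or $\{1,\lambda,\lambda^{-1}\}$ (the paper writes $U_t$ explicitly as a rotation block, you derive the pairing from $\J A\J=A^{-1}$), whence $M_T$ is of type $\mathrm{III}_\lambda$ rather than $\mathrm{III}_1$. One caution: the ``if and only if'' you invoke is not literally what the paper states (it only asserts the ``if'' direction), and the correct converse criterion is phrased in terms of the Connes $S$-invariant, i.e.\ the \emph{closure} of the group generated by the eigenvalues --- a dense proper subgroup (e.g.\ generated by two multiplicatively independent eigenvalues) still yields type $\mathrm{III}_1$. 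This does not affect your conclusion here, since in your cases the generated group is trivial or cyclic, hence closed and proper, exactly matching the paper's computation $S(M_T)=\{\lambda^n:n\in\Z\}\cup\{0\}$.
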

\begin{proof}

	If possible let     $\dim(\h_\R) =2 \text{ or } 3$ and $M_T$ is type $\mathrm{III}_1$ factor. Then $(U_t) \neq 1$,  otherwise it will be type $\mathrm{II}_1$ factor (see  \cite[Theorem 4.5]{MK1}).  Therefore, $( \h_{\R}, U_t)$ will have the following form (see \cite{Shly}):
	\begin{enumerate}
		\item when $\dim(\h_\R) =2 $, then 
		$$
		\h_\R = \R^2, \quad U_t  = \left( \begin{matrix} \cos( t\log \lambda)& - \sin( t\log\lambda) \\
		\sin( t\log\lambda)& \cos( t\log\lambda)  \end{matrix} \right), \text{ for some  }\lambda > 1 \text{ and }
		$$
		\item when $\dim(\h_\R) =3 $, then 
		$$
		\h_\R = \R \oplus \R^2 \text{ and }  \quad U_t  = {id}_t \oplus \left( \begin{matrix} \cos( t\log \lambda)& - \sin( t\log\lambda) \\
		\sin( t\log\lambda)& \cos( t\log\lambda)  \end{matrix} \right), \text{ for some  }\lambda > 1,
		$$\\
		where $(\text{id}_t) $ is the identity representation.
	\end{enumerate}
	Therefore, Connes $S$-invariant (see \cite{CO-S} for the definition of Connes $S$-invariant) will be  $ S( M_T) = \{  \lambda^n : n \in \Z \} \cup \{ 0\}$   (see  \cite[Theorem 4.5]{MK1}) and consequently, $M_T$ will be type  $\mathrm{III}_\lambda$  factor for $ 0< \lambda < 1$.
	Thus, $ \dim( \h_\R) \neq  2 \text{ or } 3$.  Conversely, we have $ \dim( \h_\R) > 3 $.  
\end{proof}

\begin{thm}
	Let $-1<q_{ij}=q_{ji}<1$ be real numbers such that $\sup_{i,j}\abs{q_{ij}}<1$ and let $(\h_{\R}, U_t)$ be a strongly continuous orthogonal representation such that  the mixed $q$-Araki-Woods algebra $M_T$  is a type $\mathrm{III}_1$ factor, then  it  has  trivial bicentralizer.
\end{thm}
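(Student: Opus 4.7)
I split into cases according to whether the weakly mixing part $\h_\R^{wm}$ vanishes. If $\h_\R^{wm}=0$, then $\h_\R=\h_\R^{ap}$, and since $M_T$ is of type $\mathrm{III}_1$, Lemma \ref{dim3} forces $\dim(\h_\R^{ap})>3$; Theorem \ref{apbi} then yields $B_\varphi(M_T)=\C 1$. From now on I assume $\h_\R^{wm}\neq 0$; together with the previous case, this exhausts all possibilities (even when $\dim\h_\R^{ap}<3$).

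The strategy in the remaining case is to use the ultraproduct to enlarge $M_T$ to a mixed $q$-Araki-Woods algebra with an infinite-dimensional almost periodic component, and then to import unitaries from this enlarged algebra into $(M_T^\omega)^{\varphi^\omega}$ via Theorem \ref{embedding}. Applying Theorem \ref{subspace} to $(\h_\R^{wm},U|_{\h_\R^{wm}})$ I obtain a separable infinite-dimensional closed subspace $L_\R\subseteq(\h_\R^{wm})_{U,\omega}\ominus\h_\R^{wm}$ on which $U_\omega|_{L_\R}$ is almost periodic; in particular $L_\R\perp\h_\R$ inside $(\h_\R)_{U,\omega}$. Form $\widetilde{\h}_\R:=\h_\R\oplus L_\R$ with the orthogonal representation $\widetilde{U}:=U\oplus U_\omega|_{L_\R}$, and put $N:=\Gamma_T(\widetilde{\h}_\R,\widetilde{U})''$ with vacuum state $\phi_\omega$. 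By Theorem \ref{embedding} applied with $\widetilde{\h}_\R\subseteq(\h_\R)_{U,\omega}$ there is a state-preserving unital $*$-embedding $\iota:N\hookrightarrow M_T^\omega$; since for $\xi\in\h_\R$ the generator $s(\xi)\in N$ corresponds to the constant sequence, $\iota$ restricted to $M_T=N(\h_\R,U)\subseteq N$ coincides with the standard diagonal inclusion $M_T\hookrightarrow M_T^\omega$. Now fix any $\widetilde{U}$-invariant real subspace $\D\subseteq L_\R$ with $\dim_\R\D\geq 2$, for example one of the two-dimensional pieces $L^n_\R$ built in the proof of Theorem \ref{subspace}. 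Since $\widetilde{U}|_\D$ is almost periodic, Lemma \ref{unitary} applied to $N$ in the role of $M_T$ produces unitaries $(u_k)_k\subseteq N(\D,\widetilde{U})\cap N^{\phi_\omega}$ with $u_k\to 0$ in w.o.t.

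As $\iota$ is state preserving, $\iota(u_k)\in(M_T^\omega)^{\varphi^\omega}$. For $x\in B_\varphi(M_T)=((M_T^\omega)^{\varphi^\omega})'\cap M_T$ the relation $x\,\iota(u_k)=\iota(u_k)\,x$ in $M_T^\omega$ combined with injectivity of $\iota$ and the identification $\iota|_{M_T}=\mathrm{id}$ upgrades to $xu_k=u_kx$ in $N$ for every $k$, hence $xu=ux$ in $N^\omega$ with $u=(u_k)^\omega$. Lemma \ref{commu} applied verbatim to $N$ with invariant subspace $\D\subseteq\widetilde{\h}_\R$ then forces $x\in N(\D,\widetilde{U})$. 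Since also $x\in M_T=N(\h_\R,U)$ and $\D\cap\h_\R=\{0\}$ (as $\D\subseteq L_\R\perp\h_\R$), the intersection identity $N(\D)\cap N(\h_\R)=N(\D\cap\h_\R)=N(\{0\})=\C 1$ (used at the end of Theorem \ref{apbi}) gives $x\in\C 1$. The main obstacle I anticipate is essentially bookkeeping: one must check that Theorem \ref{embedding} does identify the natural copy of $M_T$ inside $N$ with the diagonal inclusion $M_T\subseteq M_T^\omega$ (so that the bicentralizer commutation pulled back from $M_T^\omega$ really yields commutation inside $N$), and that both Lemma \ref{commu} and the subalgebra-intersection identity remain valid when the ambient algebra is the enlarged $N$ rather than $M_T$. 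Both are reassuring because $N$ is itself a mixed $q$-Araki-Woods algebra with the same $T$, so the arguments given earlier for $M_T$ apply word for word to $N$.
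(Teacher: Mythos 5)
Your proposal is correct and follows essentially the same route as the paper: the same case split via Lemma \ref{dim3} and Theorem \ref{apbi} when $\h_\R^{wm}=0$, and, when $\h_\R^{wm}\neq 0$, the same enlargement of $\h_\R$ by the almost periodic subspace $L_\R$ from Theorem \ref{subspace}, embedded into $M_T^\omega$ via Theorem \ref{embedding} so that Lemmas \ref{unitary} and \ref{commu} localize the bicentralizer into $M_T(\D)$ for an invariant $\D\subseteq L_\R$. The only (immaterial) difference is the last step: you intersect $N(\D)$ with $M_T=N(\h_\R)$ using $\D\perp\h_\R$, whereas the paper applies Lemma \ref{commu} to two orthogonal invariant subspaces of $L_\R$ and intersects the resulting subalgebras.
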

\begin{proof}
Suppose   $\h_{\R}=\h_{\R}^{\text{ap}}\oplus\h_{\R}^{\text{wm}}$ is  the unique decomposition of $\h_{\R}$ into almost periodic and weak mixing part with respect to the orthogonal representation $(U_t)$. 

First assume that $\h_{\R}^{\text{wm}} = 0 $.  
Then  note that if $\h_{\R}^{\text{wm}} = 0 $ and  $M_T$ is type $\mathrm{III}_1$ factor,  by Lemma \ref{dim3}, we have  $ \dim( \h_{\R}^{\text{ap}} ) > 3$. Consequently, by Theorem \ref{apbi}, it follows that $$ B_\varphi(M_T) = \C 1.$$

Now assume  $\h_{\R}^{\text{wm}} \neq  0 $. Then by Remark \ref{weakmixing}, it follows that $M_T$ is type $\mathrm{III}_1$ factor. To show that it has trivial bicentralizer, we  consider the real Hilbert space $$\widetilde{\h}_{\R}:=\h_{\R}^{\text{ap}}\oplus\h_{\R}^{\text{wm}} \oplus ((\h_{\R}^{\text{wm}})_{U,\omega}\ominus\h_{\R}^{\text{wm}}),$$ which is a subspace of $(\h_{\R})_{U,\omega}$. 
Now suppose $$ \widetilde{ M_T } = \Gamma_{T}( \widetilde{ \h}_\R , U_\omega   )''.$$

Using Theorem \ref{embedding} and  Remark \ref{condi}, we have the inclusion of von Neumann algebras $  \widetilde{ M_T }  \subseteq  M_T^{\omega}$ with conditional  expectation. Since $\h_{\R}^{\text{wm}}\neq 0$, it follows from Theorem \ref{subspace} that there exists  a    closed subspace $\widetilde{D}_{\R} \subseteq L_\R \subseteq (\h_{\R}^{\text{wm}})_{U,\omega}\ominus\h_{\R}^{\text{wm}}$ such
	that ${\widetilde{D}_{\R}}$ is $U_{\omega}$-invariant.
Then, by Theorem \ref{embedding},  we have  
$$M_T(\widetilde{D}_{\R}, U_{\omega})\subseteq  \widetilde{ M_T } \subseteq M_T^{\omega}$$ 
with conditional expectation (see Remark \ref{condi}).
Therefore, we have the following 
\begin{align*}
B_{\varphi}(M_T) &= 	{\big((M_T^\omega)^{\varphi^\omega}\big)}' \cap M_T, ~ (\text{see Eq. }\ref{rela})\\
&\subseteq   \big(M_T(\widetilde{\D}_{\R}, U_{\omega})^ {\varphi_\omega} \big)^\prime \cap  \widetilde{M_T}\\
&\subseteq  M_T(\widetilde{\D}_{\R}, U_{\omega}) \text{ by Lemma } \ref{commu}.
\end{align*}
Since by Theorem \ref{subspace},    $  L_\R$ is infinite dimensional and $ U_\omega|_{ L_\R}$ is almost periodic, so,  we can find two  orthogonal subspaces 
$\widetilde{\D^1}_{\R}$ and $\widetilde{\D^2}_{\R}$ of $  L_\R$  such that 
$$ B_{\varphi}(M_T) \subseteq M_T(\widetilde{\D^1}_{\R}, U_{\omega}) \cap M_T(\widetilde{\D^2}_{\R}, U_{\omega}) = \C 1.$$
This completes the proof.	
	
\end{proof}


\bibliographystyle{plain}

\end{document}